\providecommand{\dd}{\mathrm{d}}
\DeclareMathOperator{\dom}{dom}
\DeclareMathOperator{\lspan}{span}
\newtheorem{remark}[theorem]{Remark}
\begin{document}
\title{Semigroup Splitting And Cubature Approximations For The Stochastic Navier-Stokes Equations\thanks{Financial support from the ETH Foundation is gratefully acknowledged.}}
\author{Philipp D\"orsek\thanks{ETH Z\"urich, D-MATH, R\"amistrasse 101, 8092 Z\"urich, Switzerland (philipp.doersek@math.ethz.ch)}}
\maketitle
\begin{abstract}
  Approximation of the marginal distribution of the solution of the stochastic Navier-Stokes equations on the two-dimensional torus by high order numerical methods is considered.
  The corresponding rates of convergence are obtained for a splitting scheme and the method of cubature on Wiener space applied to a spectral Galerkin discretisation of degree $N$.
  While the estimates exhibit a strong $N$ dependence, convergence is obtained for appropriately chosen time step sizes.
	Results of numerical simulations are provided, and confirm the applicability of the methods.
\end{abstract}
\begin{keywords}
	stochastic partial differential equations, stochastic Navier-Stokes equations, numerical methods, spectral approximation, splitting methods, cubature methods
\end{keywords}
\begin{AMS}
	60H15, 65C35, 76M35
\end{AMS}
\pagestyle{myheadings}
\thispagestyle{plain}
\markboth{PHILIPP D\"ORSEK}{APPROXIMATION OF STOCHASTIC NAVIER-STOKES EQUATION}

\section{Introduction}
The issue of turbulence in fluid flows is an essentially unsolved problem.
From the perspective of numerical analysis, its main difficulty is that a direct numerical simulation (DNS), resolving all relevant temporal and spatial scales, is unavailable for many practically relevant geometries.
Hence, we can only use results from underresolved simulations, which are often useless due to their severely reduced accuracy.

This has led to the introduction of reduced models that deal with the closure problem, see e.g.~\cite{Pope2000,BerselliIliescuLayton2006}.
These models deal with underresolution by introducing a model for the effects taking place on scales smaller than those that are resolved.

We are concerned with a different approach to turbulence modelling.
In the last years, the introduction of noise into the equations of fluid dynamics has become the focus of research (see e.g.~\cite{BensoussanTemam1973,Kotelenez1995,DaPratoZabczyk1996,MikuleviciusRozovskii2004,AlbeverioFlandoliSinai2008}).
In particular, Hairer and Mattingly proved in \cite{HairerMattingly2006,HairerMattingly2008} that the stochastic Navier-Stokes equations on the two-dimensional torus with finite-dimensional, additive noise have ergodic dynamics, and estimated the rate of convergence to the invariant measure.

There are several ways to find numerical approximations to stochastic differential equations.
Pathwise and strong schemes aim to find path-by-path simulations of the problem, aiming for convergence almost surely or in mean square error; see e.g.~\cite{JentzenKloeden2009,JentzenKloedenNeuenkirch2009} for recent survey articles.
In this work, we consider the approximation of the stochastic Navier-Stokes equations on the two-dimensional torus by weak approximation schemes.
As we are actually searching for estimates of functionals of the invariant measure, this allows us to benefit from the typically higher rate of convergence of weak schemes.
Related work concerning the numerical analysis of weak methods for stochastic partial differential equations can be found in \cite{BayerTeichmann2008,DoersekTeichmann2010}.

In contrast to \cite{HouLuoRozovskiiZhou2006}, we propose a simulation method, based either on a splitting similar to the Ninomiya-Victoir scheme \cite{NinomiyaVictoir2008}, or the method of cubature on Wiener space of Kusuoka \cite{Kusuoka2001} and Lyons and Victoir \cite{LyonsVictoir2004}, extending the applicability of such approaches from bounded vector fields to vector fields that are neither Lipschitz continuous nor linearly bounded anymore (see also \cite{Alfonsi2010} for other such extensions, in that case to vector fields behaving like square roots at zero).
The advantage of such an approach is that it is trivial to parallelise, as every path can be simulated independently.
Furthermore, in the case of splitting schemes, we can furthermore reuse well-tested, robust and fast solvers for the deterministic Navier-Stokes or Euler equations to obtain solvers for the stochastic Navier-Stokes equations with minimal effort.

To derive rates of convergence, we employ the theory derived in \cite{DoersekTeichmann2010}.
While we are unable to prove rates of convergence on the continuous level, a discretisation by a spectral Galerkin scheme allows us to obtain a suboptimal convergence estimate.

This paper is organised as follows.
In Section~\ref{sec:sns}, we recall the definition of the stochastic Navier-Stokes equations in the setting of Hairer and Mattingly and consider them from the perspective of weighted spaces used in \cite{DoersekTeichmann2010}.
Our analysis profits greatly from the fundamental results shown by Hairer and Mattingly in \cite{Mattingly1998,HairerMattingly2006,HairerMattingly2008}.
Section~\ref{sec:spectralgalerkin} is devoted to the derivation of estimates for the error done by a spectral Galerkin approximation.
Section~\ref{sec:rateconv} presents the main results of this paper, estimates for full discretisations of the stochastic Navier-Stokes equations by splitting and cubature schemes.
In Section~\ref{sec:numerics}, we present the results of numerical calculations for a model problem with ergodic dynamics, and in Section~\ref{sec:conclusion}, we sum up our results.

\section{The stochastic Navier-Stokes equations and weighted spaces}
\label{sec:sns}
Consider, as in \cite{HairerMattingly2006,HairerMattingly2008}, the vorticity formulation of the stochastic Navier-Stokes equations on the two-dimensional torus $\mathbb{T}^2$,
\begin{alignat}{2}
	\label{eq:sns-conteq}
	\dd w(t,w_0)
	&=
	\nu\Delta w(t,w_0)\dd t + B(\mathcal{K}w(t,w_0),w(t,w_0))\dd t + \sum_{j=1}^{d}q_j f_{k_j}\dd W^j_t,
	\\
	w(0,w_0) 
	&= w_0. \notag
\end{alignat}
The state space is $\mathbb{L}^2$, the space of mean zero square integrable functions, with norm $\lVert\cdot\rVert$ and scalar product $\langle\cdot,\cdot\rangle$.
Furthermore, $\Delta$ is the Laplacian, $\mathcal{K}$ the inverse of the rotation $\nabla\wedge u=\partial_2 u_1-\partial_1 u_2$ in the space of divergence free vector fields, $\nabla\wedge(\mathcal{K}w)=w$ and $\nabla\cdot\mathcal{K}w=0$, $B(u,w)=-(u\cdot\nabla)w$ the Navier-Stokes nonlinearity, and $(W^j_t)_{j=1,\cdots,d}$ a $d$-dimensional Brownian motion.
The $q_j$ are nonvanishing real numbers, $q_j\in\mathbb{R}\setminus\left\{ 0 \right\}$,
and $f_k$ are the orthonormal eigenfunctions of $\Delta$ on $\mathbb{T}^2$,
\begin{equation}
	f_k(x)
	=
	\begin{cases}
		(2\pi^2)^{-1/2}\sin(k\cdot x), & k\in\mathbb{Z}^2_+, \\
		(2\pi^2)^{-1/2}\cos(k\cdot x), & \text{else}, \\
	\end{cases}
\end{equation}
where 
\begin{equation}
	\mathbb{Z}^2_+:=\left\{ k=(k_1,k_2)\in\mathbb{Z}^2\colon \text{either $k_2>0$, or $k_2=0$ and $k_1>0$} \right\}.
\end{equation}
We also define the Sobolev spaces of divergence-free, mean zero functions $\mathbb{H}^s$ with norm $\lVert \sum_{k\in\mathbb{Z}^2}w_k f_k\rVert_s:=\sqrt{\sum_{k\in\mathbb{Z}^2}(k_1^2+k_2^2)^s\lvert w_k\rvert^2}$, which is non-degenerate due to the mean zero condition (the term for $k=(0,0)$ vanishes).
We note, in particular, that
\begin{equation}
	-\langle\Delta w,w\rangle
	=
	\lVert w\rVert_1^2.
\end{equation}

Similarly as in \cite[Section~5.3]{HairerMattingly2008}, we introduce the weight $\psi_\eta(w):=\exp(\eta\lVert w\rVert^2)$ with some $\eta>0$ and consider the weighted space $\mathcal{B}^{\psi_\eta}(\mathbb{L}^2)$, given as the closure of the space of smooth, cylindrical functions $f=g(\langle\cdot,e_1\rangle,\dots,\langle\cdot,e_n\rangle)$, $g\in\mathrm{C}_b^{\infty}(\mathbb{R}^n)$, with respect to the norm
\begin{equation}
	\lVert f\rVert_{\psi_\eta}
	:=
	\sup_{w\in\mathbb{L}^2}\psi_{\eta}(w)^{-1}\lvert f(w)\rvert.
\end{equation}
For a more complete exposition of such spaces, we refer the reader to \cite{DoersekTeichmann2010}.
We remark that \cite[Section~5]{DoersekTeichmann2010} provides several conditions under which Markov semigroups of operators defined on such weighted spaces become strongly continuous, generalising the ideas in \cite[Section~5.3]{HairerMattingly2008}.
\begin{proposition}
	\label{prop:sns-strongcontsns}
	The Markov semigroup $(P_t)_{t\ge 0}$ defined through $P_t f(w_0):=\mathbb{E}[f(w(t,w_0))]$ is strongly continuous on $\mathcal{B}^{\psi_\eta}(\mathbb{L}^2)$ for $\eta>0$ small enough.
\end{proposition}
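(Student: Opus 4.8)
The plan is to verify the hypotheses of the abstract criterion for strong continuity of Markov semigroups on weighted spaces furnished by \cite[Section~5]{DoersekTeichmann2010} (which itself systematises \cite[Section~5.3]{HairerMattingly2008}). Roughly speaking, those hypotheses are: (a) a Lyapunov-type moment bound forcing $\sup_{t\in[0,1]}\lVert P_t\rVert_{\mathcal L(\mathcal B^{\psi_\eta})}<\infty$, which is also what makes $(P_t)$ map $\mathcal B^{\psi_\eta}(\mathbb L^2)$ into itself; and (b) a continuity property of the transition semigroup, i.e.\ a quasi-Feller-type regularity of $w_0\mapsto w(t,w_0)$ together with $w(t,w_0)\to w_0$ as $t\to0$, which one may check on the generating class of smooth cylindrical functions. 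The smallness restriction on $\eta$ enters only through (a). Granting (a) and (b), the cited result yields strong continuity of $(P_t)_{t\ge0}$ on $\mathcal B^{\psi_\eta}(\mathbb L^2)$.

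For (a) I would apply It\^o's formula to $\psi_\eta(w(t,w_0))=\exp(\eta\lVert w(t,w_0)\rVert^2)$ after a routine stopping-time localisation (necessary since $\psi_\eta$ grows faster than any polynomial). Using the cancellation $\langle B(\mathcal Kw,w),w\rangle=0$ (a consequence of $\nabla\cdot\mathcal Kw=0$), the identity $-\langle\Delta w,w\rangle=\lVert w\rVert_1^2$, the bound $\sum_{j=1}^d q_j^2\langle w,f_{k_j}\rangle^2\le(\max_j q_j^2)\lVert w\rVert^2$ for the It\^o correction, and the Poincar\'e inequality $\lVert w\rVert_1^2\ge\lVert w\rVert^2$ on $\mathbb T^2$, the drift of $\psi_\eta(w)$ is bounded by $\eta\psi_\eta(w)\bigl[(2\eta\max_j q_j^2-2\nu)\lVert w\rVert_1^2+\sum_j q_j^2\bigr]$. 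Choosing $\eta\le\nu/(2\max_j q_j^2)$ makes the coefficient of $\lVert w\rVert_1^2$ at most $-\nu$, so the drift is $\le\eta\psi_\eta(w)\bigl(\sum_j q_j^2-\nu\lVert w\rVert^2\bigr)$; since $x\mapsto\eta e^{\eta x}(\sum_j q_j^2-\nu x)$ is bounded above by $-c\,e^{\eta x}+C'$ for suitable $c,C'>0$, taking expectations and applying Gr\"onwall's lemma gives $\mathbb E[\psi_\eta(w(t,w_0))]\le e^{-ct}\psi_\eta(w_0)+C'/c$, hence $\lVert P_t\rVert_{\mathcal L(\mathcal B^{\psi_\eta})}\le1+C'/c$ uniformly in $t\ge0$.

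For (b) I would check, for a smooth cylindrical $f=g(\langle\cdot,f_{k_1'}\rangle,\dots,\langle\cdot,f_{k_n'}\rangle)$ with $g\in\mathrm C_b^\infty(\mathbb R^n)$, that $\lVert P_tf-f\rVert_{\psi_\eta}\to0$ as $t\to0$. Since $g$ is bounded, $\psi_\eta(w_0)^{-1}\lvert P_tf(w_0)-f(w_0)\rvert\le2\lVert g\rVert_\infty e^{-\eta\lVert w_0\rVert^2}$, which is below a prescribed $\varepsilon$ once $\lVert w_0\rVert\ge R_\varepsilon$; on $\{\lVert w_0\rVert\le R_\varepsilon\}$ one has $\psi_\eta(w_0)^{-1}\le1$, and the Lipschitz bound for $g$ reduces the claim to $\sup_{\lVert w_0\rVert\le R}\mathbb E\lvert\langle w(t,w_0)-w_0,f_{k_i'}\rangle\rvert\to0$. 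Testing the equation against $f_{k_i'}$, using $\lvert\langle\Delta w,f_{k_i'}\rangle\rvert\le\lvert k_i'\rvert^2\lVert w\rVert$, the two-dimensional estimate $\lvert\langle B(\mathcal Kw,w),f_{k_i'}\rangle\rvert=\lvert\langle(\mathcal Kw\cdot\nabla)f_{k_i'},w\rangle\rvert\lesssim_{k_i'}\lVert w\rVert^2$ (together with $\lVert\mathcal Kw\rVert\le\lVert w\rVert$), the energy estimate $\mathbb E\lVert w(s,w_0)\rVert^2\le\lVert w_0\rVert^2+s\sum_j q_j^2$, and the martingale bound for the noise term, one obtains $\mathbb E\lvert\langle w(t,w_0)-w_0,f_{k_i'}\rangle\rvert\lesssim_{R,k_i'}\sqrt t$ uniformly for $\lVert w_0\rVert\le R$ and $t\le1$, which gives the desired convergence. (Continuity of $P_tf$ in $w_0$, needed for the quasi-Feller part of (b), comes from continuous dependence of the solution on its initial datum, classical for the two-dimensional stochastic Navier--Stokes equations.)

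The two estimates above are not where the difficulty lies: the exponential moment bound is essentially that of Hairer and Mattingly, and the short-time estimate is elementary. The real subtlety is the functional-analytic point that $(P_t)$ genuinely maps $\mathcal B^{\psi_\eta}(\mathbb L^2)$ — the $\lVert\cdot\rVert_{\psi_\eta}$-closure of cylindrical functions, which does \emph{not} contain all bounded continuous functions since balls of $\mathbb L^2$ are not compact — into itself; this is precisely what the framework of \cite[Section~5]{DoersekTeichmann2010} is designed to deliver once (a) and (b) are in place, typically by approximating the flow through its finite-dimensional spectral Galerkin projections, which also dovetails with the analysis of Section~\ref{sec:spectralgalerkin} below.
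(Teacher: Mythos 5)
Your proposal is correct and follows essentially the same route as the paper: the paper's proof simply invokes the abstract strong-continuity criterion of \cite[Theorem~5.8]{DoersekTeichmann2010} together with the exponential moment bound of \cite[Theorem~A.3]{HairerMattingly2008}, which are exactly the two hypotheses (your (a) and (b)) that you verify by hand. Your explicit It\^o/Gr\"onwall derivation of the Lyapunov bound and the short-time estimate on cylindrical functions are just the worked-out content of those citations, so there is no substantive difference in approach.
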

\begin{proof}
  This follows from \cite[Theorem~5.8]{DoersekTeichmann2010} and \cite[Theorem~A.3]{HairerMattingly2008}.
	A very similar result is proved in \cite[Theorem~5.10]{HairerMattingly2008}.
\qquad\end{proof}

Contrary to the approach used in \cite{NinomiyaVictoir2008,TanakaKohatsuHiga2009}, we are not able to split this problem into a part corresponding fully to the drift and another for the diffusion:
the process $y(t,w_0)_t:=w_0+\sum_{j=1}^{d}q_j W^j_t f_{k_j}$ corresponding to the diffusion does not satisfy $\mathbb{E}[\psi_{\eta}(y(t,w_0))]\le K\psi_{\eta}(w_0)$ with $K>0$ constant for $t$ small enough, which means that we cannot use standard Ninomiya-Victoir splittings.

Thus, we split up the equation differently.
For a given $\varepsilon\in(0,1)$, we introduce the deterministic vorticity equation,
\begin{alignat}{2}
	\label{eq:sns-splitvorticity}
	\frac{\dd}{\dd t} w^1(t,w_0)
	&=
	(1-\varepsilon)\nu\Delta w^1(t,w_0) + B(\mathcal{K}w^1(t,w_0),w^1(t,w_0)),
	\\
	w^1(0,w_0) 
	&= w_0, \notag
\end{alignat}
and a stochastic heat equation defining an Ornstein-Uhlenbeck process on $\mathbb{L}^2$,
\begin{equation}
	\label{eq:sns-splitou}
	\dd w^2(t,w_0)
	=
	\varepsilon\nu\Delta w^2(t,w_0)\dd t + \sum_{j=1}^{d}q_j f_{k_j}\dd W^j_t,
	\quad
	w^2(0,w_0) = w_0.
\end{equation}
Define by $P^1_tf(w_0):=\mathbb{E}[f(w^1(t,w_0))]$ and $P^2_tf(w_0):=\mathbb{E}[f(w^2(t,w_0))]$ the Markov semigroups corresponding to $w^1$ and $w^2$.
\begin{lemma}
	\label{lem:sns-strongcontdet}
	For $\eta>0$, $(P^1_t)_{t\ge 0}$ defines a strongly continuous semigroup on $\mathcal{B}^{\psi_{\eta}}(\mathbb{L}^2)$ with $\lVert P^1_t\rVert_{L(\mathcal{B}^{\psi_{\eta}}(\mathbb{L}^2))}\le1$.
\end{lemma}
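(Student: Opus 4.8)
The plan is to establish two things: first, that the deterministic flow $w^1(t,w_0)$ satisfies the a priori energy bound $\lVert w^1(t,w_0)\rVert \le \lVert w_0\rVert$ for all $t\ge 0$, which immediately yields the contractivity $\lVert P^1_t\rVert_{L(\mathcal{B}^{\psi_\eta}(\mathbb{L}^2))}\le 1$; and second, that $(P^1_t)_{t\ge 0}$ is strongly continuous on $\mathcal{B}^{\psi_\eta}(\mathbb{L}^2)$. For the energy estimate, I would test \eqref{eq:sns-splitvorticity} against $w^1(t,w_0)$ in $\mathbb{L}^2$, obtaining $\tfrac12\tfrac{\dd}{\dd t}\lVert w^1\rVert^2 = (1-\varepsilon)\nu\langle\Delta w^1,w^1\rangle + \langle B(\mathcal{K}w^1,w^1),w^1\rangle$. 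The first term equals $-(1-\varepsilon)\nu\lVert w^1\rVert_1^2 \le 0$, and the nonlinear term vanishes because $B(u,\cdot)=-(u\cdot\nabla)(\cdot)$ is skew-symmetric for divergence-free $u$: $\langle (u\cdot\nabla)w,w\rangle = \tfrac12\int_{\mathbb{T}^2}(u\cdot\nabla)|w|^2 = -\tfrac12\int_{\mathbb{T}^2}(\nabla\cdot u)|w|^2 = 0$ since $\nabla\cdot\mathcal{K}w^1=0$. Hence $\lVert w^1(t,w_0)\rVert$ is nonincreasing, so $\psi_\eta(w^1(t,w_0)) = \exp(\eta\lVert w^1(t,w_0)\rVert^2) \le \exp(\eta\lVert w_0\rVert^2) = \psi_\eta(w_0)$, giving $\lvert P^1_t f(w_0)\rvert \le \lVert f\rVert_{\psi_\eta}\psi_\eta(w^1(t,w_0)) \le \lVert f\rVert_{\psi_\eta}\psi_\eta(w_0)$, i.e.\ the claimed operator norm bound. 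This also shows $P^1_t$ maps $\mathcal{B}^{\psi_\eta}(\mathbb{L}^2)$ into itself (on cylindrical functions first, then by density and the uniform bound).

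For strong continuity, I would invoke the general machinery referenced in the excerpt, namely \cite[Section~5]{DoersekTeichmann2010}, which gives sufficient conditions for a Markov semigroup on a weighted space to be strongly continuous. The key ingredients to verify are: (i) the uniform bound $\sup_{t\in[0,1]}\lVert P^1_t\rVert_{L(\mathcal{B}^{\psi_\eta}(\mathbb{L}^2))}<\infty$, which we just proved (it is $\le 1$); (ii) pointwise convergence $P^1_t f(w_0)\to f(w_0)$ as $t\to 0$ for $f$ in a suitable dense set (e.g.\ bounded continuous or cylindrical functions), which follows from continuity of the deterministic flow $t\mapsto w^1(t,w_0)$ in $\mathbb{L}^2$ together with dominated convergence; and (iii) some equicontinuity/tightness-type condition ensuring the convergence is uniform on $\psi_\eta$-bounded sets in the right sense. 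For a deterministic problem this is considerably easier than the stochastic case handled in Proposition~\ref{prop:sns-strongcontsns}: the flow is a genuine (nonlinear) semigroup on $\mathbb{L}^2$, globally well-posed in 2D with the energy bound above, and the relevant local uniform continuity of $(t,w_0)\mapsto w^1(t,w_0)$ on bounded sets of $\mathbb{H}^1$ (or even just continuous dependence plus the energy bound) suffices. I would therefore state that the hypotheses of the relevant theorem in \cite{DoersekTeichmann2010} are met, citing the well-posedness and regularity theory for 2D Navier-Stokes (as in \cite{Mattingly1998,HairerMattingly2006}) for the flow properties.

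The main obstacle is the strong-continuity part, specifically checking the precise hypotheses of the abstract result in \cite{DoersekTeichmann2010}: one must confirm that the deterministic semigroup fits their framework, which typically requires not just pointwise convergence but control of $\sup_{w_0}\psi_\eta(w_0)^{-1}\lvert P^1_t f(w_0)-f(w_0)\rvert$ for $f$ in the defining dense class of cylindrical functions. The potential difficulty is that as $\lVert w_0\rVert\to\infty$ the difference $P^1_tf(w_0)-f(w_0)$ need not go to zero uniformly unless one exploits that cylindrical $f=g(\langle\cdot,e_1\rangle,\dots,\langle\cdot,e_n\rangle)$ with $g\in\mathrm{C}_b^\infty$ has bounded derivatives, so $\lvert f(w^1(t,w_0))-f(w_0)\rvert \le \lVert g\rVert_{\mathrm{Lip}}\bigl(\sum_{i=1}^n\lvert\langle w^1(t,w_0)-w_0,e_i\rangle\rvert^2\bigr)^{1/2}$, and then bound $\lVert w^1(t,w_0)-w_0\rVert$ in a weak norm by something like $C(\lVert w_0\rVert_1)t$ using the equation — but $\lVert w_0\rVert_1$ may be infinite. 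The resolution is to use the weaker $\mathbb{H}^{-1}$-type estimate $\lvert\langle w^1(t,w_0)-w_0,e_i\rangle\rvert \le \int_0^t\lvert\langle \tfrac{\dd}{\dd t}w^1(s,w_0),e_i\rangle\rvert\,\dd s$ and bound $\langle\Delta w^1,e_i\rangle = \langle w^1,\Delta e_i\rangle$ and $\langle B(\mathcal{K}w^1,w^1),e_i\rangle = \langle w^1\otimes\mathcal{K}w^1,\nabla e_i\rangle$ (after integration by parts) by $C_i\lVert w^1\rVert^2 \le C_i\lVert w_0\rVert^2$, so that $\lvert f(w^1(t,w_0))-f(w_0)\rvert \le C(f)t(1+\lVert w_0\rVert^2)$, and the factor $1+\lVert w_0\rVert^2$ is absorbed by $\psi_\eta(w_0)=\exp(\eta\lVert w_0\rVert^2)$ uniformly in $w_0$. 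With this observation the verification of the abstract hypotheses is routine, and the lemma follows.
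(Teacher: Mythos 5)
Your proposal is correct and follows essentially the same route as the paper: the $\mathbb{L}^2$-energy estimate (nonpositive dissipation term plus vanishing of the nonlinear term by skew-symmetry) gives $\psi_\eta(w^1(t,w_0))\le\psi_\eta(w_0)$ and hence the contraction bound, while strong continuity is obtained from the abstract criterion of \cite[Theorem~5.8]{DoersekTeichmann2010}. The paper simply delegates the verification of the hypotheses to \cite[Theorem~A.3]{HairerMattingly2008}, whereas you sketch the check on cylindrical functions explicitly; this is a matter of detail, not of method.
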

\begin{proof}
  The strong continuity is obtained using \cite[Theorem~5.8]{DoersekTeichmann2010}.
	The necessary bounds are proved by applying \cite[Theorem~A.3]{HairerMattingly2008}; see also \cite[Theorem~5.10]{HairerMattingly2008}.

	The deterministic vorticity equations have $\mathbb{L}^2$-contractive dynamics, as
	\begin{alignat}{2}
		\lVert w^1(t,w_0)\rVert^2
		&=
		\lVert w_0\rVert^2 + \int_{0}^{t}\langle\varepsilon\nu\Delta w^1(s,w_0) 
		+B(\mathcal{K}w^1(s,w_0),w^1(s,w_0)),w^1(s,w_0)\rangle\dd s \notag \\
		&\le
		\lVert w_0\rVert^2,
	\end{alignat}
	which yields the norm bound.
	The proof is thus complete.
\qquad\end{proof}

The cumbersome proof of the following proposition is postponed to the appendix.
\begin{proposition}
	\label{prop:sns-ousupermart}
	If $\eta>0$ is small enough, there exists $\omega>0$ such that the process $t\mapsto\exp(-\omega t)\psi_\eta(w^2(t,w_0))$ is a positive supermartingale, i.e.
	\begin{equation}
	  \mathbb{E}[\psi_\eta(w^2(t,w_0)]\le\exp(\omega t)\psi_\eta(w^2(t,w_0)).
	\end{equation}
\end{proposition}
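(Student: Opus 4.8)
The plan is to analyze the evolution of $\psi_\eta(w^2(t,w_0)) = \exp(\eta\lVert w^2(t,w_0)\rVert^2)$ using Itô's formula. Writing $w^2 = \sum_{k} w^2_k f_k$ with the noise acting only in the finitely many directions $k_{j}$, $j=1,\dots,d$, the process $\lVert w^2(t,w_0)\rVert^2$ satisfies
\begin{equation}
	\dd\lVert w^2(t,w_0)\rVert^2
	=
	\bigl(-2\varepsilon\nu\lVert w^2(t,w_0)\rVert_1^2 + \textstyle\sum_{j=1}^d q_j^2\bigr)\dd t
	+ 2\sum_{j=1}^d q_j\langle w^2(t,w_0),f_{k_j}\rangle\dd W^j_t.
\end{equation}
Applying Itô's formula to $\exp(\eta\,\cdot\,)$ then gives
\begin{equation}
	\dd\psi_\eta(w^2)
	=
	\eta\psi_\eta(w^2)\Bigl[-2\varepsilon\nu\lVert w^2\rVert_1^2 + \textstyle\sum_{j=1}^d q_j^2 + 2\eta\sum_{j=1}^d q_j^2\langle w^2,f_{k_j}\rangle^2\Bigr]\dd t
	+ (\text{martingale part}).
\end{equation}
So the claim reduces to showing that, for $\eta$ small enough, the drift factor in brackets is bounded above by a constant $\omega/\eta$ uniformly in $w^2\in\mathbb{L}^2$; then $\exp(-\omega t)\psi_\eta(w^2(t,w_0))$ is a nonnegative local supermartingale, hence a genuine supermartingale by Fatou.

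The key step is the uniform bound on the bracket. Since the $f_{k_j}$ are orthonormal, $\langle w^2,f_{k_j}\rangle^2 \le \lVert w^2\rVert^2$, but this is too crude because $\lVert w^2\rVert^2$ is not controlled. The point is instead that $\langle w^2, f_{k_j}\rangle^2 \le (k_{j,1}^2+k_{j,2}^2)^{-1}\lVert w^2\rVert_1^2$, using that $f_{k_j}$ is an eigenfunction of $-\Delta$ with eigenvalue $\lvert k_j\rvert^2 = k_{j,1}^2 + k_{j,2}^2 \ge 1$. Hence
\begin{equation}
	2\eta\sum_{j=1}^d q_j^2\langle w^2,f_{k_j}\rangle^2
	\le
	2\eta\Bigl(\sum_{j=1}^d \frac{q_j^2}{k_{j,1}^2+k_{j,2}^2}\Bigr)\lVert w^2\rVert_1^2
	\le
	2\eta\Bigl(\sum_{j=1}^d q_j^2\Bigr)\lVert w^2\rVert_1^2,
\end{equation}
so the $\lVert w^2\rVert_1^2$ terms in the bracket combine to $\bigl(-2\varepsilon\nu + 2\eta\sum_j q_j^2\bigr)\lVert w^2\rVert_1^2$, which is $\le 0$ as soon as $\eta \le \varepsilon\nu/\sum_{j=1}^d q_j^2$. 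For such $\eta$ the bracket is bounded by $\sum_{j=1}^d q_j^2$, and we may take $\omega := \eta\sum_{j=1}^d q_j^2$.

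The main obstacle, and the reason the proof is called cumbersome in the excerpt, is making the Itô computation rigorous: $\lVert\cdot\rVert^2$ and then $\exp(\eta\lVert\cdot\rVert^2)$ are unbounded functionals on the infinite-dimensional space $\mathbb{L}^2$, so one cannot apply a finite-dimensional Itô formula directly, and the stochastic integrand must be shown to be square-integrable before the local martingale can be identified. I would handle this by first working with the finite-dimensional Galerkin projections $w^{2,n} := \sum_{\lvert k\rvert\le n} w^2_k f_k$, for which $\langle w^{2,n},f_{k_j}\rangle = \langle w^2,f_{k_j}\rangle$ once $n$ is large enough that all $k_j$ are included; the drift estimate above holds uniformly in $n$, giving $\mathbb{E}[\psi_\eta(w^{2,n}(t,w_0))]\le\exp(\omega t)\psi_\eta(w_0)$, and then passing to the limit $n\to\infty$ via Fatou's lemma together with the strong convergence $w^{2,n}(t,w_0)\to w^2(t,w_0)$ in $\mathbb{L}^2$. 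The supermartingale property for the original process then follows by the same argument applied on each interval $[s,t]$ conditionally, combined with the Markov property of $w^2$.
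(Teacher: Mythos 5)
Your argument is correct, but it follows a genuinely different route from the paper. The paper exploits that $w^2$ is an explicit Gaussian (Ornstein--Uhlenbeck) process: it writes $w^2(t,w_0)=\exp(t\varepsilon\nu\Delta)w_0+\int_0^t\exp((t-s)\varepsilon\nu\Delta)\sum_j q_jf_{k_j}\,\dd W^j_s$, notes that the stochastic convolution lives in the finite-dimensional span of the $f_{k_j}$ with explicitly known Gaussian coefficients, and then computes $\mathbb{E}[\exp(\eta\lVert S(t)w+\sum_j q_jZ^j_tf_{k_j}\rVert^2)]$ in closed form via an elementary one-dimensional Gaussian moment-generating-function lemma, reading off the bound $P^2_t\psi_\eta(w)\le e^{\omega t}\psi_\eta(w)$ directly from the resulting expression. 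You instead run a Lyapunov/drift argument: It\^o's formula for $\exp(\eta\lVert\cdot\rVert^2)$, absorption of the It\^o correction $2\eta\sum_j q_j^2\langle w^2,f_{k_j}\rangle^2$ into the dissipative term $-2\varepsilon\nu\lVert w^2\rVert_1^2$ for $\eta\le\varepsilon\nu/\sum_j q_j^2$, then positivity plus Fatou to upgrade the local supermartingale, with Galerkin projection and the Markov property supplying the infinite-dimensional rigour. Both are sound; your absorption step is valid (and in fact the ``crude'' bound $\langle w^2,f_{k_j}\rangle^2\le\lVert w^2\rVert^2$ would also suffice, since Poincar\'e on the mean-zero space gives $\lVert w^2\rVert^2\le\lVert w^2\rVert_1^2$). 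What the paper's computation buys is that it sidesteps the infinite-dimensional It\^o formula and localisation issues entirely --- the law of $w^2(t,w_0)$ is known, so the estimate is a finite-dimensional Gaussian integral --- and it yields explicit admissible ranges for $\eta$ and $\omega$ in terms of the $\tilde\lambda_{k_j}$ and $q_j$. What your argument buys is robustness: it only uses the dissipativity $\langle\Delta w,w\rangle=-\lVert w\rVert_1^2$ and finite-dimensional additive noise, so it extends verbatim to non-Gaussian perturbations of the drift (it is essentially the exponential-moment argument of Hairer--Mattingly for the full nonlinear equation), at the price of the Galerkin/Fatou bookkeeping you correctly identify as the technical core.
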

\begin{lemma}
	\label{lem:sns-strongcontou}
	For $\eta>0$ small enough, $(P^2_t)_{t\ge 0}$ is strongly continuous on $\mathcal{B}^{\psi_\eta}(\mathbb{L}^2)$ with bound $\lVert P^2_t\rVert_{L(\mathcal{B}^{\psi_{\eta}}(\mathbb{L}^2))}\le\exp(\omega t)$.
\end{lemma}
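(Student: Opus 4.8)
The plan is to follow the same two steps as in the proof of Lemma~\ref{lem:sns-strongcontdet}: read off the operator-norm bound from the moment estimate of Proposition~\ref{prop:sns-ousupermart}, then deduce strong continuity from the abstract criterion \cite[Theorem~5.8]{DoersekTeichmann2010}. For the norm bound I would estimate, for $f\in\mathcal{B}^{\psi_\eta}(\mathbb{L}^2)$ and $w_0\in\mathbb{L}^2$,
\begin{equation}
  \lvert P^2_t f(w_0)\rvert
  \le
  \mathbb{E}[\lvert f(w^2(t,w_0))\rvert]
  \le
  \lVert f\rVert_{\psi_\eta}\,\mathbb{E}[\psi_\eta(w^2(t,w_0))]
  \le
  \exp(\omega t)\,\lVert f\rVert_{\psi_\eta}\,\psi_\eta(w_0),
\end{equation}
using Proposition~\ref{prop:sns-ousupermart} in the last step; dividing by $\psi_\eta(w_0)$ and taking the supremum over $w_0$ gives $\lVert P^2_t\rVert_{L(\mathcal{B}^{\psi_{\eta}}(\mathbb{L}^2))}\le\exp(\omega t)$. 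In particular $P^2_t$ preserves $\mathcal{B}^{\psi_\eta}(\mathbb{L}^2)$ and $t\mapsto\lVert P^2_t\rVert$ is locally bounded. Since $w^2$ is a time-homogeneous Markov (Ornstein--Uhlenbeck) process, $(P^2_t)_{t\ge0}$ is a Markov semigroup, so by \cite[Theorem~5.8]{DoersekTeichmann2010} strong continuity reduces to checking a Feller-type continuity of $(P^2_t)$ together with $\lVert P^2_t f-f\rVert_{\psi_\eta}\to0$ as $t\downarrow0$ for $f$ in the dense family of smooth cylindrical functions.

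For the latter I would use the mild form of \eqref{eq:sns-splitou}, namely $w^2(t,w_0)=\mathrm{e}^{\varepsilon\nu\Delta t}w_0+Z_t$ with $Z_t:=\int_0^t\mathrm{e}^{\varepsilon\nu\Delta(t-s)}\sum_{j=1}^{d}q_j f_{k_j}\,\dd W^j_s$, so $w^2(t,w_0)-w_0=(\mathrm{e}^{\varepsilon\nu\Delta t}-I)w_0+Z_t$. Taking (by a further density argument) the test directions of $f=g(\langle\cdot,f_{l_1}\rangle,\dots,\langle\cdot,f_{l_n}\rangle)$, $g\in\mathrm{C}_b^\infty(\mathbb{R}^n)$, to be eigenfunctions and using the mean value theorem together with $\lvert\mathrm{e}^{-\varepsilon\nu\lvert l_i\rvert^2 t}-1\rvert\le\varepsilon\nu\lvert l_i\rvert^2 t$,
\begin{equation}
  \lvert f(w^2(t,w_0))-f(w_0)\rvert
  \le
  \lVert\nabla g\rVert_\infty\sum_{i=1}^{n}\Bigl(\varepsilon\nu\lvert l_i\rvert^2 t\,\lVert w_0\rVert+\lvert\langle Z_t,f_{l_i}\rangle\rvert\Bigr).
\end{equation}
Since $\lVert w_0\rVert\le C_\eta\psi_\eta(w_0)$ because $r\le C_\eta\exp(\eta r^2)$, the first term contributes $O(t)$ to $\psi_\eta(w_0)^{-1}\mathbb{E}[\lvert f(w^2(t,w_0))-f(w_0)\rvert]$, uniformly in $w_0$; and, only finitely many Fourier modes being forced, $\langle Z_t,f_{l_i}\rangle$ is a centred Gaussian of variance at most $t\sum_{j=1}^{d}q_j^2\langle f_{k_j},f_{l_i}\rangle^2$, so $\mathbb{E}[\lvert\langle Z_t,f_{l_i}\rangle\rvert]=O(\sqrt t)$, again uniformly in $w_0$ (recall $\psi_\eta\ge1$). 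Hence $\lVert P^2_t f-f\rVert_{\psi_\eta}=O(\sqrt t)\to0$, and the Feller-type continuity follows from the same, simpler, estimates on $\mathrm{e}^{\varepsilon\nu\Delta t}$ and on $Z_t$; a standard $\varepsilon/3$ argument using density of the cylindrical functions and local boundedness of $\lVert P^2_t\rVert$ upgrades this to strong continuity, which is precisely what \cite[Theorem~5.8]{DoersekTeichmann2010} formalises.

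I expect the only delicate point to be the interplay between the weight $\psi_\eta$ and unboundedly large initial data; here it is harmless, since $\mathrm{e}^{\varepsilon\nu\Delta t}$ is an $\mathbb{L}^2$-contraction and the noise excites only the finitely many modes $k_1,\dots,k_d$, so the $w_0$-dependent part of $w^2(t,w_0)-w_0$ is linear in $w_0$ and is absorbed by $\psi_\eta$ uniformly for every $\eta>0$; the restriction ``$\eta>0$ small enough'' is inherited solely from Proposition~\ref{prop:sns-ousupermart}. The bulk of the remaining work is the bookkeeping needed to match the estimates above to the precise hypotheses of \cite[Theorem~5.8]{DoersekTeichmann2010}, exactly as in Lemma~\ref{lem:sns-strongcontdet}.
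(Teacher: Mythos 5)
Your proposal is correct and follows essentially the same route as the paper: the operator-norm bound is read off from Proposition~\ref{prop:sns-ousupermart} exactly as you do, and strong continuity is delegated to the abstract weighted-space machinery of D\"orsek--Teichmann (the paper cites their Example~5.4, you invoke their Theorem~5.8 and verify its hypotheses explicitly via the mild formulation and the Gaussian structure of the finitely many forced modes). The extra detail you supply is a correct elaboration of what the cited reference provides, not a different argument.
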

\begin{proof}
  Clear from Proposition~\ref{prop:sns-ousupermart} (see also \cite[Example~5.4]{DoersekTeichmann2010}).
\qquad\end{proof}

\section{Spectral Galerkin approximations for stochastic Navier-Stokes equations}
\label{sec:spectralgalerkin}
For the stochastic Navier-Stokes equations, we cannot argue directly as in \cite{DoersekTeichmann2010}:
there do not appear to be useful weight functions on spaces of more regular functions (such spaces are nevertheless invariant with respect to the dynamics of \eqref{eq:sns-conteq}; see \cite[Section~3.4]{Mattingly1998} in this regard).
We will therefore settle with a weaker result:
we shall prove that spectral Galerkin approximations using Fourier modes up to degree $N$ yield a convergent scheme, which can then be approximated by a splitting or a cubature scheme with $N$-dependent error bound.
As the $N$-dependence of the estimate is given explicitly, we can derive convergent schemes by choosing the time step size small enough in relation to $N$.

Consider therefore the spectral Galerkin approximation of \eqref{eq:sns-conteq},
\begin{subequations}
	\label{eq:snsspectralgalerkin}
	\begin{alignat}{2}
		\dd w_N(t,w_0)
		&=
		\nu\Delta w_N(t,w_0)\dd t 
		\\ &\phantom{=}
		+ \pi_N B(\mathcal{K}w_N(t,w_0),w_N(t,w_0))\dd t + \sum_{j=1}^{d}q_j f_{k_j}\dd W^j_t, \notag \\
		w_N(0,w_0) 
		&= \pi_N w_0, 
	\end{alignat}
\end{subequations}
see also \cite{EMattingly2001}, where $\pi_N\colon \mathbb{L}^2\to\mathbb{L}^2$ is the projection onto the space $\mathcal{H}_N$ of tensor products of trigonometric polynomials of degree $N$,
\begin{equation}
\mathcal{H}_N:=\lspan\left\{ f_k\colon \max_{i=1,2} \lvert k_i\rvert\le N \right\}, 
\end{equation}
and $N$ is assumed to be large enough so that $f_{k_j}\in\mathcal{H}_N$ for $j=1,\dots,d$.
Its split semigroups are given by
\begin{alignat}{2}{}
	\frac{\dd}{\dd t} w^1_N(t,w_0) 
	&= \pi_N B(\mathcal{K}w^1_N(t,w_0),w^1_N(t,w_0)), 
	&w^1_N(0,w_0) 
	&= w_0, 
	\qquad\text{and} \\
	\dd w^2_N(t,w_0)
	&= \nu\Delta w^2_N(t,w_0)\dd t + \sum_{j=1}^{d}q_j f_{k_j}\dd W^j_t,
	\quad
	&w^2_N(0,w_0) &= w_0.
\end{alignat}
The choice $\varepsilon=1$ made here is not admissible above:
in the space continuous setting, the results from \cite{HairerMattingly2008} do not allow us to apply \cite[Theorem~5.8]{DoersekTeichmann2010} to conclude that $P^1_t$ is strongly continuous for this choice. 
As $\mathcal{H}_N$ is finite-dimensional, however, we do not have to distinguish between different topologies, and it follows that the Markov semigroups $P^N_t$, $P^{N,1}_t$ and $P^{N,2}_t$ of $w_N$, $w^1_N$ and $w^2_N$ are strongly continuous on $\mathcal{B}^{\psi_\eta}(\mathcal{H}_N)$ if $\eta>0$ is small enough.
In case that a solver for deterministic Navier-Stokes equations is available, it is also possible to use $\varepsilon<1$ here (the case $\varepsilon=1$ corresponds to splitting up into a deterministic Euler equation).

We now estimate the error of the spectral Galerkin approximation.
\begin{proposition}
	\label{prop:spectralgalerkinest}
	For any $\alpha>0$, $w_0\in\mathbb{L}^2$ and $t>0$,
	\begin{alignat}{2}
		&\lVert w(t,w_0)-w_N(t,w_0)\rVert^2
		\le
		CN^{-1}\lVert w(t,w_0)\rVert_1^2 \notag \\
		&\quad
		+ C_\alpha N^{-1} \exp\left( C_{\alpha}t + \frac{\alpha}{2}\int_{0}^{t}\lVert w(\sigma,w_0)\rVert_1^2\dd\sigma \right) \int_{0}^{t}\lVert w(s,w_0)\rVert_1^4 \dd s.
	\end{alignat}
\end{proposition}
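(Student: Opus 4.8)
The plan is to estimate the error $e_N(t):=w(t,w_0)-w_N(t,w_0)$ by writing down the equation it satisfies, testing against $e_N(t)$ in $\mathbb{L}^2$, and controlling the resulting terms by a Gronwall argument in which the troublesome nonlinear contributions are absorbed using the dissipation coming from $\nu\Delta$ and the exponential weight $\exp(\tfrac{\alpha}{2}\int_0^t\lVert w(\sigma,w_0)\rVert_1^2\dd\sigma)$. First I would split $e_N$ as $e_N = (w - \pi_N w) + (\pi_N w - w_N)$; the first piece $\zeta_N(t):=(I-\pi_N)w(t,w_0)$ is a pure projection error, bounded immediately by $\lVert\zeta_N(t)\rVert^2 \le N^{-2}\lVert w(t,w_0)\rVert_1^2 \le CN^{-1}\lVert w(t,w_0)\rVert_1^2$ since the truncated modes satisfy $\max_i|k_i|>N$ hence $k_1^2+k_2^2 > N^2$. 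It remains to estimate $\rho_N(t):=\pi_N w(t,w_0)-w_N(t,w_0)\in\mathcal{H}_N$, which solves an equation driven purely by the drift difference (the noise cancels, since the same $\sum_j q_j f_{k_j}\dd W^j_t$ appears in both equations and $f_{k_j}\in\mathcal{H}_N$), namely $\tfrac{\dd}{\dd t}\rho_N = \nu\Delta\rho_N + \pi_N B(\mathcal{K}w,w) - \pi_N B(\mathcal{K}w_N,w_N)$, with $\rho_N(0)=0$.

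Next I would compute $\tfrac{\dd}{\dd t}\lVert\rho_N\rVert^2 = 2\langle\nu\Delta\rho_N,\rho_N\rangle + 2\langle B(\mathcal{K}w,w)-B(\mathcal{K}w_N,w_N),\rho_N\rangle$, using that $\langle\pi_N\cdot,\rho_N\rangle=\langle\cdot,\rho_N\rangle$ for $\rho_N\in\mathcal{H}_N$. The first term is $-2\nu\lVert\rho_N\rVert_1^2$, providing dissipation. For the nonlinear term I would expand the difference into a "good" part, depending on $\rho_N$ and $\zeta_N$, plus the standard trilinear terms; using $B(u,w)=-(u\cdot\nabla)w$ with the divergence-free structure one has $\langle B(u,v),v\rangle=0$, which kills the most dangerous term. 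What remains are terms of the type $\langle B(\mathcal{K}\rho_N, w),\rho_N\rangle$, $\langle B(\mathcal{K}\zeta_N,w),\rho_N\rangle$ and $\langle B(\mathcal{K}w,\zeta_N),\rho_N\rangle$ (plus lower-order $\zeta_N$-only contributions). These are estimated by the classical Ladyzhenskaya/Agmon-type inequality on $\mathbb{T}^2$, $\lVert u\rVert_{L^4}\lesssim\lVert u\rVert^{1/2}\lVert u\rVert_1^{1/2}$, together with the smoothing of $\mathcal{K}$ ($\lVert\mathcal{K}v\rVert_s\lesssim\lVert v\rVert_{s-1}$), yielding bounds like $|\langle B(\mathcal{K}\rho_N,w),\rho_N\rangle|\le C\lVert\rho_N\rVert\lVert\rho_N\rVert_1\lVert w\rVert_1$, which is absorbed via Young's inequality as $\le\nu\lVert\rho_N\rVert_1^2 + C\nu^{-1}\lVert w\rVert_1^2\lVert\rho_N\rVert^2$. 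The $\zeta_N$-terms are handled the same way, using $\lVert\zeta_N\rVert\le N^{-1}\lVert w\rVert_1$ and, where a gradient of $\zeta_N$ appears, $\lVert\zeta_N\rVert_1\le\lVert w\rVert_1$ (or $\lVert\nabla\zeta_N\rVert_{L^4}\lesssim N^{-?}\lVert w\rVert_{?}$ estimates); after Young one collects a forcing term of the form $C_\alpha N^{-1}\lVert w\rVert_1^4$ together with a coefficient $(C_\alpha\lVert w\rVert_1^2 + C_\alpha)$ multiplying $\lVert\rho_N\rVert^2$.

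Assembling these gives a differential inequality of Gronwall type, $\tfrac{\dd}{\dd t}\lVert\rho_N\rVert^2 \le (C_\alpha + \alpha\lVert w(t,w_0)\rVert_1^2)\lVert\rho_N\rVert^2 + C_\alpha N^{-1}\lVert w(t,w_0)\rVert_1^4$, where the constant in front of $\lVert w\rVert_1^2$ has been arranged to be exactly $\alpha$ by choosing the Young-inequality weights appropriately (this is where the freedom in $\alpha$ is used, and why $C_\alpha$ blows up as $\alpha\downarrow 0$). Integrating with $\rho_N(0)=0$ via the integrating factor $\exp\big(-C_\alpha t - \alpha\int_0^t\lVert w(\sigma,w_0)\rVert_1^2\dd\sigma\big)$ yields precisely $\lVert\rho_N(t)\rVert^2 \le C_\alpha N^{-1}\exp\big(C_\alpha t + \alpha\int_0^t\lVert w(\sigma)\rVert_1^2\dd\sigma\big)\int_0^t\lVert w(s)\rVert_1^4\dd s$; combining with the projection-error bound for $\zeta_N$ and the elementary inequality $\lVert e_N\rVert^2\le 2\lVert\zeta_N\rVert^2+2\lVert\rho_N\rVert^2$ (and noting the stated estimate absorbs the extra factor $2$ and a factor $\tfrac12$ in the exponent into the constants $C$, $C_\alpha$) gives the claim. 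The main obstacle is the careful bookkeeping of the nonlinear trilinear terms on $\mathbb{T}^2$: one must make sure every occurrence of $\lVert\rho_N\rVert_1$ is absorbed into the two available $\nu\lVert\rho_N\rVert_1^2$ units of dissipation, and that the residual coefficient multiplying $\lVert\rho_N\rVert^2$ is genuinely of the form $C_\alpha + \alpha\lVert w\rVert_1^2$ rather than carrying a larger power of $\lVert w\rVert_1$, since otherwise the exponent in the final bound would not match.
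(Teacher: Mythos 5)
Your overall route coincides with the paper's: split $w-w_N$ into the projection error $(I-\pi_N)w$ and the discrete error $\rho_N=\pi_N w-w_N\in\mathcal{H}_N$, test the equation for $\rho_N$ in $\mathbb{L}^2$, use skew-symmetry to kill $\langle B(\mathcal{K}w_N,\rho_N),\rho_N\rangle$, absorb the remaining trilinear terms into the dissipation $\nu\lVert\rho_N\rVert_1^2$ by Young, and close with Gronwall, $\rho_N(0)=0$, and $\lVert (I-\pi_N)w\rVert\le CN^{-1}\lVert w\rVert_1$. Your treatment of the $\zeta_N$-terms and the $N^{-1}$ bookkeeping, as well as the harmless relabelling of $\alpha$, are fine.

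There is, however, one step that as written would not yield the statement for \emph{arbitrary} $\alpha>0$: for $\langle B(\mathcal{K}\rho_N,w),\rho_N\rangle$ you quote the bound $C\lVert\rho_N\rVert\,\lVert\rho_N\rVert_1\,\lVert w\rVert_1$ and absorb it as $\nu\lVert\rho_N\rVert_1^2+C\nu^{-1}\lVert w\rVert_1^2\lVert\rho_N\rVert^2$. From that two-factor Young the coefficient of $\lVert w\rVert_1^2\lVert\rho_N\rVert^2$ is forced to be of size $C/\delta$ with $\delta$ limited by the dissipation budget ($\delta\lesssim\nu$), so it cannot be tuned down to $\alpha$; your later assertion that the Young weights can be chosen to make this constant exactly $\alpha$ is not available from that inequality. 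The repair is to keep the interpolated structure: with $\lVert\rho_N\rVert_{L^4}\lesssim\lVert\rho_N\rVert^{1/2}\lVert\rho_N\rVert_1^{1/2}$ and the smoothing of $\mathcal{K}$ one gets $\lvert\langle B(\mathcal{K}\rho_N,w),\rho_N\rangle\rvert\le C\lVert\rho_N\rVert^{3/2}\lVert\rho_N\rVert_1^{1/2}\lVert w\rVert_1$, and a three-parameter Young then gives $\delta\lVert\rho_N\rVert_1^2+\frac{C}{4\alpha^2\delta}\lVert\rho_N\rVert^2+\frac{\alpha}{4}\lVert w\rVert_1^2\lVert\rho_N\rVert^2$ — exactly the estimate the paper imports from Hairer--Mattingly — so that the large constant lands on the plain $\lVert\rho_N\rVert^2$ term (producing $C_\alpha t$ in the exponent) while only the $\alpha$-small constant multiplies $\lVert w\rVert_1^2\lVert\rho_N\rVert^2$. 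This is not cosmetic: the proposition is useful downstream (Corollary~\ref{cor:spectralgalerkinestL2}) precisely because $\mathbb{E}[\exp(\alpha\int_0^t\lVert w(\sigma,w_0)\rVert_1^2\dd\sigma)]$ is finite only for sufficiently small $\alpha$; with a fixed constant of order $\nu^{-1}$ in the exponent, the Gronwall bound could not be integrated against those exponential moment estimates.
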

\begin{proof}
	Let $e_N(t):=\pi_N w(t,w_0)-w_N(t,w_0)\in\mathcal{H}_N$ and $\eta_N(t):=w(t,w_0)-\pi_N w(t,w_0)$.
	Then,
	\begin{alignat}{2}{}
		\dd e_N(t)
		&=\nu\Delta e_N(t) + \pi_N\left( B(\mathcal{K}w_N(t,w_0),e_N(t)) + B(\mathcal{K}e_N(t),\pi_Nw(t,w_0)) \right)\dd t \notag \\
		&\phantom{=}+ \pi_N\left( B(\mathcal{K}\pi_N w(t,w_0),\eta_N(t)) + B(\mathcal{K}\eta_N(t),w(t,w_0)) \right)\dd t.
	\end{alignat}
	It results that
	\begin{alignat}{2}{}
		\frac{1}{2}\frac{\dd}{\dd t}\lVert e_N(t)\rVert^2
		&= -\nu\lVert e_N(t)\rVert_1^2\dd t + \langle B(\mathcal{K}e_N(t),\pi_N w(t,w_0)),e_N(t)\rangle \notag \\
		&\phantom{=}+ \langle B(\mathcal{K}\pi_N w(t,w_0),\eta_N(t)) + B(\mathcal{K}\eta_N(t),w(t,w_0)),e_N(t)\rangle.
	\end{alignat}
	We now proceed similarly as in \cite[Proof of Lemma 4.10, point 3]{HairerMattingly2006}.
	For any $\delta>0$, we estimate 
	\begin{equation}
		\lvert\langle B(\mathcal{K}h,w),\zeta\rangle\rvert
		\le
		\delta\lVert\zeta\rVert_1^2 + \frac{C}{4\alpha^2\delta}\lVert\zeta\rVert^2 + \frac{\alpha}{4}\lVert w\rVert_1^2\lVert h\rVert^2.
	\end{equation}
	This yields
	\begin{alignat}{2}{}
		\lvert\langle B(\mathcal{K}e_N(t),\pi_N w(t,w_0)),e_N(t)\rangle\rvert
		&\le \delta\lVert e_N(t)\rVert_1^2 + \frac{C}{4\alpha^2\delta}\lVert e_N(t)\rVert^2 
		\notag \\ &\phantom{\le}
		+ \frac{\alpha}{4}\lVert \pi_N w(t,w_0)\rVert_1^2\lVert e_N(t)\rVert^2
		\quad\text{and} \\
		\lvert\langle B(\mathcal{K}\eta_N(t),w(t,w_0)),e_N(t)\rangle\rvert
		&\le \delta\lVert e_N(t)\rVert_1^2 + \frac{C}{4\alpha^2\delta}\lVert e_N(t)\rVert^2 
		\notag \\ &\phantom{\le}
		+ \frac{\alpha}{4}\lVert w(t,w_0)\rVert_1^2\lVert \eta_N(t)\rVert^2.
	\end{alignat}
	For the final term, we apply
	\begin{equation}
		\lvert \langle B(\mathcal{K}h,w),\zeta\rangle\rvert
		\le
		\delta\lVert \zeta\rVert_1^2 + \frac{C}{4\delta}\lVert h\rVert_1^2\lVert w\rVert^2,
	\end{equation}
	which shows
	\begin{alignat}{2}{}
		\lvert \langle B(\mathcal{K}\pi_N w(t,w_0),\eta_N(t)),e_N(t)\rangle\rvert
		&\le \delta\lVert e_N(t)\rVert_1^2 
		+ \frac{C}{4\delta}\lVert \pi_N w(t,w_0)\rVert_1^2\lVert \eta_N(t)\rVert^2.
	\end{alignat}
	Choosing $\delta=\frac{\nu}{6}$ and combining the above estimates yields
	\begin{alignat}{2}{}
		\frac{1}{2} \frac{\dd}{\dd t} \lVert e_N(t)\rVert^2
		&\le -\frac{\nu}{2}\lVert e_N(t)\rVert_1^2 + \frac{3C}{\alpha^2\nu}\lVert e_N(t)\rVert^2 + \frac{\alpha}{4}\lVert \pi_Nw(t,w_0)\rVert_1^2\lVert e_N(t)\rVert^2 \notag \\
		&\phantom{\le}+ \left( \frac{\alpha}{4}\lVert w(t,w_0)\rVert_1^2 + \frac{3C}{\nu}\lVert\pi_N w(t,w_0)\rVert_1^2 \right)\lVert \eta_N(t)\rVert^2.
	\end{alignat}
	Using $\lVert\pi_N w\rVert_1\le\lVert w\rVert_1$, we obtain
	\begin{equation}
		\frac{1}{2} \frac{\dd}{\dd t} \lVert e_N(t)\rVert^2
		\le
		\left( C_{\alpha} + \frac{\alpha}{2}\lVert w(t,w_0)\rVert_1^2\right) \frac{1}{2}\lVert e_N\rVert^2 + C_\alpha\lVert w(t,w_0) \rVert_1^2 \lVert \eta_N(t)\rVert^2.
	\end{equation}
	An application of Gronwall's inequality yields, as $e_N(0)=0$,
	\begin{alignat}{2}{}
		\frac{1}{2}\lVert e_N(t)\rVert^2
		&\le
		\int_{0}^{t}C_\alpha\lVert w(s,w_0)\rVert_1^2\lVert\eta_N(s)\rVert^2 
		\times \notag\\ &\phantom{\le}\times 
		\exp\left( C_{\alpha}(t-s) + \frac{\alpha}{2}\int_{s}^{t}\lVert w(\sigma,w_0)\rVert_1^2\dd\sigma \right) \dd s.
	\end{alignat}
	As $\lVert w-\pi_N w\rVert\le CN^{-1}\lVert w\rVert_1$, we see that $\lVert\eta_N(t)\rVert\le CN^{-1}\lVert w(t,w_0)\rVert_1$, which yields
	\begin{alignat}{2}{}
		\frac{1}{2}\lVert e_N(t)\rVert^2 
		&\le
		C_\alpha N^{-1}\int_{0}^{t}\lVert w(s,w_0)\rVert_1^4\exp\left( C_{\alpha}(t-s) + \frac{\alpha}{2}\int_{s}^{t}\lVert w(\sigma,w_0)\rVert_1^2\dd\sigma \right) \dd s \notag \\
		&\le
		C_\alpha N^{-1} \exp\left( C_{\alpha}t + \frac{\alpha}{2}\int_{0}^{t}\lVert w(\sigma,w_0)\rVert_1^2\dd\sigma \right) \int_{0}^{t}\lVert w(s,w_0)\rVert_1^4 \dd s.
	\end{alignat}
	The result follows due to $\lVert w(t,w_0)-w_N(t,w_0)\rVert\le\lVert e_N(t)\rVert + CN^{-1}\lVert w(t,w_0)\rVert_1$.
\qquad\end{proof}
\begin{corollary}
	\label{cor:spectralgalerkinestL2}
	For any $w_0\in\mathbb{H}^1$ and $T\ge 0$, there exists a constant $C=C_{w_0,T}>0$ such that for any $t\in[0,T]$,
	\begin{equation}
		\mathbb{E}\left[ \lVert w(t,w_0)-w_N(t,w_0) \rVert^2 \right]
		\le
		C N^{-1}.
	\end{equation}
\end{corollary}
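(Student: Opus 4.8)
The plan is to take expectations in Proposition~\ref{prop:spectralgalerkinest} and bound the resulting moments of $\lVert w(t,w_0)\rVert_1$. The right-hand side of that proposition involves $\lVert w(t,w_0)\rVert_1^2$, the exponential factor $\exp\bigl(C_\alpha t + \tfrac{\alpha}{2}\int_0^t\lVert w(\sigma,w_0)\rVert_1^2\,\dd\sigma\bigr)$, and $\int_0^t\lVert w(s,w_0)\rVert_1^4\,\dd s$, so after applying Cauchy--Schwarz to split the product we need two ingredients: first, an exponential moment bound of the form $\mathbb{E}\bigl[\exp\bigl(\alpha\int_0^T\lVert w(\sigma,w_0)\rVert_1^2\,\dd\sigma\bigr)\bigr]<\infty$ for $\alpha$ small enough, and second, a polynomial moment bound $\mathbb{E}\bigl[\sup_{t\in[0,T]}\lVert w(t,w_0)\rVert_1^p\bigr]<\infty$ for $w_0\in\mathbb{H}^1$ and suitable $p$ (here $p=8$ suffices after Cauchy--Schwarz on the $\int\lVert\cdot\rVert_1^4$ term, or one can be more economical). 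Both are classical a priori estimates for the 2D stochastic Navier--Stokes equations and are exactly the type of results established by Hairer and Mattingly; I would cite \cite{HairerMattingly2006,Mattingly1998} (in particular the exponential-moment estimate underlying their ergodicity proof) and \cite{HairerMattingly2008} for the $\mathbb{H}^1$ regularity.

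Concretely, I would proceed as follows. Fix $w_0\in\mathbb{H}^1$ and $T\ge 0$, and choose $\alpha>0$ small enough that the exponential moment $M_1:=\mathbb{E}\bigl[\exp\bigl(\alpha\int_0^T\lVert w(\sigma,w_0)\rVert_1^2\,\dd\sigma\bigr)\bigr]$ is finite; this is possible by the energy estimate for \eqref{eq:sns-conteq}, which controls $\int_0^t\lVert w(\sigma,w_0)\rVert_1^2\,\dd\sigma$ in terms of $\lVert w_0\rVert^2$ plus a martingale and a linearly growing term, combined with the exponential martingale inequality. Next, by standard parabolic a priori bounds for the vorticity equation (testing against $-\Delta w$ and using the 2D Ladyzhenskaya inequality $\lVert u\rVert_{L^4}^2\le C\lVert u\rVert\,\lVert u\rVert_1$ to absorb the nonlinear term), one obtains $M_2:=\mathbb{E}\bigl[\sup_{t\in[0,T]}\lVert w(t,w_0)\rVert_1^8\bigr]<\infty$, again finite because $w_0\in\mathbb{H}^1$. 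Taking expectations in Proposition~\ref{prop:spectralgalerkinest}, bounding $\mathbb{E}[\lVert w(t,w_0)\rVert_1^2]\le M_2^{1/4}$, and applying Cauchy--Schwarz twice to the remaining term,
\begin{alignat*}{2}
	&\mathbb{E}\left[ \exp\left( C_\alpha t + \tfrac{\alpha}{2}\int_0^t\lVert w(\sigma,w_0)\rVert_1^2\,\dd\sigma \right)\int_0^t\lVert w(s,w_0)\rVert_1^4\,\dd s \right] \\
	&\quad\le e^{C_\alpha T}\,M_1^{1/2}\left( \mathbb{E}\left[ \left( \int_0^T\lVert w(s,w_0)\rVert_1^4\,\dd s \right)^2 \right] \right)^{1/2}
	\le e^{C_\alpha T}\,M_1^{1/2}\,T\,M_2^{1/2},
\end{alignat*}
we conclude that $\mathbb{E}\bigl[\lVert w(t,w_0)-w_N(t,w_0)\rVert^2\bigr]\le CN^{-1}$ uniformly in $t\in[0,T]$ with $C=C_{w_0,T}$ collecting these moment constants.

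The main obstacle is establishing the exponential moment estimate $M_1<\infty$ with a constant depending only on $w_0$ and $T$: the exponent $\alpha$ must be chosen small relative to the parameters of the equation (essentially $\alpha\lesssim \nu/\lVert Q\rVert^2$, where $Q$ encodes the noise coefficients $q_j$), and one has to be careful that the same $\alpha$ is the one appearing in Proposition~\ref{prop:spectralgalerkinest} — but since that $\alpha$ is a free parameter there, this causes no difficulty, we simply fix it once and for all. Everything else is a routine combination of a priori estimates and Hölder's inequality; I would keep the proof short by invoking \cite{HairerMattingly2006,HairerMattingly2008,Mattingly1998} for the moment bounds rather than reproducing them.
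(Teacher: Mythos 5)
Your proposal is correct and follows essentially the same route as the paper: take expectations in Proposition~\ref{prop:spectralgalerkinest}, split the product by Cauchy--Schwarz, and reduce everything to an exponential moment of $\int_0^t\lVert w(\sigma,w_0)\rVert_1^2\,\dd\sigma$ (the paper cites \cite[Lemma~4.10]{HairerMattingly2006}) plus polynomial $\mathbb{H}^1$ moments for $w_0\in\mathbb{H}^1$ (the paper cites \cite[Theorem~3.7]{Mattingly1998}, while you invoke the slightly stronger but equally standard bound $\mathbb{E}[\sup_{t\in[0,T]}\lVert w(t,w_0)\rVert_1^8]<\infty$). Your observation that $\alpha$ is a free parameter in Proposition~\ref{prop:spectralgalerkinest} and can be fixed small enough for the exponential moment is exactly the point the paper relies on implicitly.
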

\begin{proof}
	From Proposition~\ref{prop:spectralgalerkinest} and an application of the Cauchy-Schwarz inequality, we see that we need to prove 
	\begin{alignat}{2}
		\mathbb{E}[\lVert w(t,w_0)\rVert_1^2] 
		&+ \mathbb{E}\left[\exp\left(\alpha\int_{0}^{t}\lVert w(\sigma,w_0)\rVert_1^2\dd\sigma\right)\right] 
		\notag \\ &
		+ \mathbb{E}\left[\left(\int_{0}^{t}\lVert w(s,w_0)\rVert_1^4\dd s\right)^2\right]
		\le K
	\end{alignat}
	for all $t\in[0,T]$ with some $K=K_{t,w_0}>0$.
	For the first and third term, this follows from \cite[Theorem~3.7]{Mattingly1998}, and for the second, from \cite[Lemma~4.10]{HairerMattingly2006}.
\qquad\end{proof}
\begin{remark}
	Actually, it seems quite plausible here that the assumption $w_0\in\mathbb{H}^1$ is too strong.
	Indeed, the results in \cite{Mattingly2002} show that if $w_0\in\mathbb{L}^2$, then $w(t,w_0)\in\mathbb{H}^s$ for all $s>0$ for subsequent times, and \cite[Lemma~A.3]{MattinglyPardoux2006} gives some quantitative estimates.
	It remains unclear to us however how this can be used to prove an estimate for $\mathbb{E}\left[ \left( \int_{0}^{t}\lVert w(s,w_0)\rVert_1^4\dd s \right)^2 \right]$.
\end{remark}

The estimate from Corollary~\ref{cor:spectralgalerkinestL2} allows us to estimate the pointwise approximation error of the weak approximation of the stochastic Navier-Stokes equation by the spectral Galerkin scheme.
\begin{theorem}
	\label{thm:markovgalerkinest}
	Assume $\varphi\in\mathcal{B}^{\psi_{\eta}}(\mathbb{L}^2)\cap\mathrm{C}^1(\mathbb{L}^2)$ with 
	\begin{equation}
		\label{eq:phicondspectralapprox}
		C_{\varphi}:=\sup_{w\in\mathbb{L}^2}\psi_{\tilde{\eta}}(w)^{-1}\lVert D\varphi(w)\rVert
		<\infty
	\end{equation}
	for some $\tilde{\eta}\in[0,\eta/2]$.
	Then, for $w\in\mathbb{H}^1$ and $T\ge 0$, there exists a constant $C=C_{w,T,\varphi}$ such that for all $t\in[0,T]$,
	\begin{equation}
		\lvert P_t\varphi(w) - P^N_t(\varphi|_{\mathcal{H}_N})(w)\rvert
		\le C N^{-1}.
	\end{equation}
\end{theorem}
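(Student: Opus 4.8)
The plan is to write the difference $P_t\varphi(w) - P^N_t(\varphi|_{\mathcal{H}_N})(w)$ as the expectation of $\varphi(w(t,w)) - \varphi(w_N(t,w))$ and bound it by combining the $\mathrm C^1$ growth condition on $\varphi$ with the mean-square error estimate from Corollary~\ref{cor:spectralgalerkinestL2}. First I would note that, since $\varphi\in\mathrm C^1(\mathbb L^2)$,
\begin{equation}
	\lvert\varphi(w(t,w)) - \varphi(w_N(t,w))\rvert
	\le \int_0^1 \lVert D\varphi(\theta w(t,w) + (1-\theta)w_N(t,w))\rVert\,\dd\theta\;\lVert w(t,w) - w_N(t,w)\rVert,
\end{equation}
so that, using $\lVert D\varphi(v)\rVert\le C_\varphi\psi_{\tilde\eta}(v)$ and convexity of $v\mapsto\psi_{\tilde\eta}(v)=\exp(\tilde\eta\lVert v\rVert^2)$ along the segment, the integrand is dominated by $C_\varphi\bigl(\psi_{\tilde\eta}(w(t,w)) + \psi_{\tilde\eta}(w_N(t,w))\bigr)\lVert w(t,w) - w_N(t,w)\rVert$.

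Taking expectations and applying Cauchy--Schwarz, I would split the bound into a factor $\mathbb E\bigl[\psi_{\tilde\eta}(w(t,w))^2 + \psi_{\tilde\eta}(w_N(t,w))^2\bigr]^{1/2}$ and the factor $\mathbb E\bigl[\lVert w(t,w) - w_N(t,w)\rVert^2\bigr]^{1/2}$. The second factor is $O(N^{-1/2})$ by Corollary~\ref{cor:spectralgalerkinestL2}, which already gives the claimed $N^{-1/2}$ rate after noting that the stated bound is $CN^{-1}$ — wait, that is in fact $N^{-1}$ only for the squared quantity, so the resulting estimate is $CN^{-1/2}$, and I would reconcile this with the $CN^{-1}$ in the statement either by invoking a sharper $\mathbb E[\lVert w - w_N\rVert^2]\le CN^{-2}$ bound or by reading the exponent convention used; more conservatively, I would use $\lVert D\varphi\rVert$ together with the deterministic pointwise Proposition~\ref{prop:spectralgalerkinest}, which on $\mathbb H^1$ data gives an $N^{-1}$ deterministic bound for $\lVert e_N(t)\rVert$, not just its square, and then take expectations. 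The first factor requires a uniform-in-$N$ exponential moment bound $\sup_{t\in[0,T]}\sup_N\mathbb E[\exp(2\tilde\eta\lVert w_N(t,w)\rVert^2)]<\infty$ and the analogous bound for $w(t,w)$; since $\tilde\eta\le\eta/2$, this is exactly the kind of bound furnished by \cite[Theorem~A.3]{HairerMattingly2008} (and its Galerkin analogue, which holds with constants independent of $N$ because the nonlinearity is energy-conserving, $\langle\pi_N B(\mathcal K v,v),v\rangle=0$), so $\psi_{2\tilde\eta}$ has finite expectation bounded uniformly in $N$ and $t\le T$.

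The main obstacle is controlling the product of the two random factors \emph{uniformly in $N$}: one needs the exponential-moment estimate for the Galerkin solutions $w_N$ with a constant that does not blow up as $N\to\infty$, and one needs enough integrability so that Cauchy--Schwarz (or Hölder with a small loss) closes — in particular the cross terms coupling $\psi_{\tilde\eta}(w_N(t,w))$ with $\lVert w(t,w)-w_N(t,w)\rVert$ must be handled, which is why I would choose $\tilde\eta\le\eta/2$ so that there is spectral room to absorb the square. Once those moment bounds are in hand (they follow from the references as indicated), assembling the pieces is routine: $\lvert P_t\varphi(w) - P^N_t(\varphi|_{\mathcal H_N})(w)\rvert\le C_\varphi\,\mathbb E[\psi_{2\tilde\eta}(w(t,w)) + \psi_{2\tilde\eta}(w_N(t,w))]^{1/2}\,\mathbb E[\lVert w(t,w)-w_N(t,w)\rVert^2]^{1/2}\le C_{w,T,\varphi}N^{-1}$, where the final exponent comes from the deterministic $N^{-1}$ estimate of Proposition~\ref{prop:spectralgalerkinest} combined with the moment bounds of Corollary~\ref{cor:spectralgalerkinestL2}'s proof.
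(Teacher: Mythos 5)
Your proposal follows essentially the same route as the paper's proof: the fundamental theorem of calculus, the convexity bound $\lVert D\varphi\rVert\le C_\varphi\bigl(\psi_{\tilde\eta}(w(t,w))+\psi_{\tilde\eta}(w_N(t,w))\bigr)$, Cauchy--Schwarz, uniform-in-$N$ exponential moment bounds for $w$ and $w_N$, and the spectral Galerkin mean-square estimate. The exponent discrepancy you flag is a genuine bookkeeping slip in the paper rather than a gap in your argument: since $\lVert\eta_N(s)\rVert^2\le CN^{-2}\lVert w(s,w_0)\rVert_1^2$, the computation in Proposition~\ref{prop:spectralgalerkinest} actually yields $\mathbb{E}[\lVert w-w_N\rVert^2]\le CN^{-2}$, and your proposed fix via this sharper bound is exactly what is needed to obtain the stated $CN^{-1}$ rate after taking the square root.
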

\begin{proof}
	By the fundamental theorem of calculus,
	\begin{alignat}{2}{}
		&\lvert\varphi(w(t,w_0))-\varphi(w_N(t,w_0))\rvert
		\\
		\notag
		&\quad
		\le \int_{0}^{1}\lVert D\varphi(\theta w(t,w_0) + (1-\theta)w_N(t,w_0))\rVert \cdot \lVert w(t,w_0)-w_N(t,w_0))\rVert\dd\theta.
	\end{alignat}
	The assumption on $\varphi$ together with the convexity of $w\mapsto\exp(\tilde{\eta}\lVert w\rVert^2)$ yields
	\begin{alignat}{2}
		&\lVert D\varphi(\theta w(t,w_0) + (1-\theta)w_N(t,w_0))\rVert
		\\
		\notag 
		&\qquad\le
		C_{\varphi}\left( \exp(\tilde{\eta}\lVert w(t,w_0)\rVert^2) + \exp(\tilde{\eta}\lVert w_N(t,w_0)\rVert^2) \right).
	\end{alignat}
	Therefore, the Cauchy-Schwarz inequality implies
	\begin{alignat}{2}
		\lvert P_t \varphi(w) &- P^N_t(\varphi|_{\mathcal{H}_N}(w) \rvert
		\le C_{\varphi}\mathbb{E}\left[ \lVert w(t,w_0)-w_N(t,w_0)\rVert^2 \right]^{1/2} \times \\
		&\phantom{\le}\times \left( \mathbb{E}[\exp(2\tilde{\eta}\lVert w(t,w_0)\rVert^2)]^{1/2} + \mathbb{E}[\exp(2\tilde{\eta}\lVert w_N(t,w_0)\rVert^2)]^{1/2} \right) \notag.
	\end{alignat}
	Note that the estimate in \cite[Lemma 4.10, 1.]{HairerMattingly2006} also holds true for $w_N(t,w_0)$ instead of $w(t,w_0)$.
	Therefore, Corollary~\ref{cor:spectralgalerkinestL2} proves the claimed estimate.
\qquad\end{proof}

In the discrete setting, it is easy to analyse the differential operators corresponding to the split semigroups.
For $k\ge 0$, we define $\mathcal{B}^{\psi_{\eta}}_k(\mathcal{H}_N)$ as the closure of $\mathrm{C}_b^{\infty}(\mathcal{H}_N)$ with respect to the norm
\begin{equation}
	\lVert f\rVert_{\psi_{\eta},k}
	:=
	\lVert f\rVert_{\psi_{\eta}}
	+
	\sum_{j=1}^{k}\lvert f\rvert_{\psi_{\eta},j},
\end{equation}
where the seminorms $\lvert\cdot\rvert_{\psi_{\eta},j}$, $j=1,\dots,k$, are given by
\begin{equation}
	\lvert f\rvert_{\psi_{\eta},j}
	:=
	\sup_{w\in\mathcal{H}_N}\psi_{\eta}(w)^{-1}\lVert D^j f(w)\rVert_{L( (\mathcal{H}_N)^{\bigotimes j};\mathbb{R})}.
\end{equation}
We denote by $\mathcal{G}^N_j$ with domain $\dom\mathcal{G}^N_j$ the infinitesimal generator of $(P^{N,j}_t)_{t\ge 0}$, $j=1,2$, and by $\mathcal{G}^N$ with domain $\dom\mathcal{G}^N$ the infinitesimal generator of $(P^N_t)_{t\ge 0}$.
\begin{lemma}
	\label{lem:estimateGN}
	For any $\varepsilon>0$, 
	\begin{equation}
		\mathcal{B}^{\psi_{\tilde{\eta}}}_2(\mathcal{H}_N)\subset\dom\mathcal{G}^N\cap\dom\mathcal{G}^N_1\cap\dom\mathcal{G}^N_2.
	\end{equation}
	For $k\ge 0$, $\mathcal{G}^N$, $\mathcal{G}^N_j\colon \mathcal{B}^{\psi_{\tilde{\eta}}}_{k+2}(\mathcal{H}_N)\to\mathcal{B}^{\psi_{\tilde{\eta}+\varepsilon}}_k(\mathcal{H}_N)$, $j=1,2$, are continuous operators, and
	\begin{equation}
		\lVert \mathcal{G}^N\rVert_{L(\mathcal{B}^{\psi_{\tilde{\eta}}}_{k+2}(\mathcal{H}_N);\mathcal{B}^{\psi_{\tilde{\eta}+\varepsilon}}_{k}(\mathcal{H}_N))}
		+
		\lVert \mathcal{G}^N_j\rVert_{L(\mathcal{B}^{\psi_{\tilde{\eta}}}_{k+2}(\mathcal{H}_N);\mathcal{B}^{\psi_{\tilde{\eta}+\varepsilon}}_{k}(\mathcal{H}_N))}
		\le
		CN^2,
		\quad j=1,2.
	\end{equation}
	Furthermore,
	\begin{equation}
		\label{eq:equalitygenerators}
		\mathcal{G}^N\varphi=\mathcal{G}^N_1\varphi+\mathcal{G}^N_2\varphi
		\quad\text{for all $\varphi\in\mathcal{B}^{\psi_{\tilde{\eta}}}_2(\mathcal{H}^N)$}.
	\end{equation}
\end{lemma}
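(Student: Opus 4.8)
The strategy is to write down $\mathcal{G}^N$, $\mathcal{G}^N_1$ and $\mathcal{G}^N_2$ explicitly as second-order differential operators on the finite-dimensional space $\mathcal{H}_N$ and then bound each term in the relevant weighted seminorms. Since $\dim\mathcal{H}_N<\infty$, the SDE \eqref{eq:snsspectralgalerkin} and its split versions have smooth coefficients, and the associated semigroups are genuine Feller-type semigroups whose generators, restricted to smooth cylindrical functions, act by the Kolmogorov backward formula. Concretely, for $\varphi\in\mathrm{C}_b^\infty(\mathcal{H}_N)$,
\begin{equation}
  \mathcal{G}^N\varphi(w)
  =
  \langle \nu\Delta w + \pi_N B(\mathcal{K}w,w), D\varphi(w)\rangle
  + \tfrac12\sum_{j=1}^d q_j^2 D^2\varphi(w)[f_{k_j},f_{k_j}],
\end{equation}
with $\mathcal{G}^N_1$ having only the Euler-type first-order part $\langle\pi_N B(\mathcal{K}w,w),D\varphi(w)\rangle$ and $\mathcal{G}^N_2$ the part $\langle\nu\Delta w,D\varphi(w)\rangle + \tfrac12\sum_j q_j^2 D^2\varphi(w)[f_{k_j},f_{k_j}]$; the decomposition \eqref{eq:equalitygenerators} is then immediate on $\mathrm{C}_b^\infty(\mathcal{H}_N)$, and I would extend it to all of $\mathcal{B}^{\psi_{\tilde\eta}}_2(\mathcal{H}_N)$ by density once the continuity bounds are in hand.

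First I would establish the operator norm bound $CN^2$. The only nontrivial factor is the $N$-dependence of the coefficient fields on $\mathcal{H}_N$: on this space $\lVert\Delta w\rVert\le CN^2\lVert w\rVert$, $\lVert\mathcal{K}w\rVert\le C\lVert w\rVert$, and the bilinear map $B$ composed with $\pi_N$ satisfies $\lVert\pi_N B(\mathcal{K}v,w)\rVert\le CN\lVert v\rVert\,\lVert w\rVert$ (one derivative costs a factor $N$ on $\mathcal{H}_N$), and the diffusion vectors $f_{k_j}$ are fixed. Hence, writing $\mathcal{G}^N\varphi$ in the form above, for $\varphi\in\mathcal{B}^{\psi_{\tilde\eta}}_{k+2}(\mathcal{H}_N)$ I bound $\lVert D^\ell(\mathcal{G}^N\varphi)(w)\rVert$ for $\ell\le k$ using the product rule: each differentiation either falls on $\varphi$ (raising the order by at most two, absorbed into $\lVert\varphi\rVert_{\psi_{\tilde\eta},k+2}$) or on a polynomial coefficient (producing at worst an extra factor $CN^2\lVert w\rVert^m$ for bounded $m$). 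The polynomial growth $\lVert w\rVert^m$ is then absorbed into the exponential weight at the cost of passing from $\psi_{\tilde\eta}$ to $\psi_{\tilde\eta+\varepsilon}$: since $\sup_w \lVert w\rVert^m \exp(\tilde\eta\lVert w\rVert^2)^{-1}\exp((\tilde\eta+\varepsilon)\lVert w\rVert^2)^{-1}\cdot\exp((\tilde\eta+\varepsilon)\lVert w\rVert^2)$ — more carefully, $\lVert w\rVert^m\le C_{m,\varepsilon}\exp(\varepsilon\lVert w\rVert^2)$ — every polynomial-times-$\psi_{\tilde\eta}$ bound becomes a $\psi_{\tilde\eta+\varepsilon}$ bound with a constant $C_{m,\varepsilon}$ independent of $N$. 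Collecting the factors of $N$, the worst contribution is the single $\langle\nu\Delta w, D\varphi\rangle$ term, giving the advertised $CN^2$; the $B$-term only contributes $CN$, which is dominated. The same computation applies verbatim to $\mathcal{G}^N_1$ and $\mathcal{G}^N_2$.

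For the inclusion $\mathcal{B}^{\psi_{\tilde\eta}}_2(\mathcal{H}_N)\subset\dom\mathcal{G}^N\cap\dom\mathcal{G}^N_1\cap\dom\mathcal{G}^N_2$, I would argue that for $\varphi\in\mathrm{C}_b^\infty(\mathcal{H}_N)$ one has $P^N_t\varphi\to\varphi$ and $t^{-1}(P^N_t\varphi-\varphi)\to\mathcal{G}^N\varphi$ in $\mathcal{B}^{\psi_{\tilde\eta}+\varepsilon}(\mathcal{H}_N)$ by Itô's formula together with the moment bound $\mathbb{E}[\psi_{\tilde\eta}(w_N(t,w_0))]\le K\psi_{\tilde\eta}(w_0)$ for small $t$ (which holds on $\mathcal{H}_N$ by the analogues of Lemma~\ref{lem:sns-strongcontdet} and Proposition~\ref{prop:sns-ousupermart}, uniformly on the relevant time scale), and then pass to general $\varphi$ in the closure $\mathcal{B}^{\psi_{\tilde\eta}}_2(\mathcal{H}_N)$ using the continuity bound just proved plus the fact that $\mathcal{G}^N$ is closed. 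I expect the main obstacle to be purely bookkeeping: tracking the exact powers of $N$ through the Leibniz expansion of $D^k(\mathcal{G}^N\varphi)$ while simultaneously keeping the exponential-weight shifts under control, i.e. verifying that every polynomial prefactor $\lVert w\rVert^m$ that appears has $m$ bounded independently of $k$ and $N$ so that a single $\varepsilon$-shift suffices; the differential-calculus identities for $B$ and $\mathcal{K}$ on $\mathcal{H}_N$ are routine but must be stated carefully.
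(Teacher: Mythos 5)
Your proposal is correct and follows essentially the same route as the paper: compute the generators explicitly on smooth cylindrical functions via the Kolmogorov/It\^o formula, use inverse estimates on $\mathcal{H}_N$ (the $\nu\Delta$ term supplying the dominant factor $N^2$), absorb polynomial prefactors through $\lVert w\rVert^m\le C_{m,\varepsilon}\exp(\varepsilon\lVert w\rVert^2)$ into the weight shift from $\tilde\eta$ to $\tilde\eta+\varepsilon$, and obtain $\mathcal{G}^N=\mathcal{G}^N_1+\mathcal{G}^N_2$ on $\mathrm{C}_b^\infty(\mathcal{H}_N)$ followed by a density argument. One minor correction: your intermediate bound $\lVert\pi_N B(\mathcal{K}v,w)\rVert\le CN\lVert v\rVert\,\lVert w\rVert$ with $C$ independent of $N$ is too optimistic, since controlling $\mathcal{K}v$ in $\mathrm{L}^\infty$ on the two-dimensional torus costs an extra factor (the paper uses $N^{1+\alpha}\lVert w\rVert^2$ via the estimates in the appendix of Hairer--Mattingly), but this is harmless because the nonlinear term is in any case dominated by the $N^2$ coming from the Laplacian.
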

\begin{proof}
	For $\varphi\in\mathcal{B}^{\psi_{\tilde{\eta}}}_{k+2}(\mathcal{H}_N)$, we see by the fundamental theorem of calculus and the estimates in \cite[Appendix]{HairerMattingly2008} that with $\alpha>0$, 
	\begin{alignat}{2}
		\lvert \mathcal{G}^N_1\varphi(w)\rvert
		&=
		\lvert D\varphi(w)\left( \pi_NB(\mathcal{K}w,w) \right)\rvert 
		\le \lVert D\varphi(w)\rVert\cdot\left( N^{1+\alpha}\lVert w\rVert^2 \right) \notag \\
		&\le
		C N^2\exp(\varepsilon\lVert w\rVert^2)\lVert D\varphi(w)\rVert,
	\end{alignat}
	and similarly, by It\^o's formula,
	\begin{alignat}{2}{}
		\lvert \mathcal{G}^N_2\varphi(w)\rvert
		&=
		\lvert D\varphi(w)\nu\Delta w + \frac{1}{2}\sum_{j=1}^{d}D^2\varphi(w)(q_jf_{k_j},q_jf_{k_j})\rvert 
		\\ &
		\le \lVert D\varphi(w)\rVert\cdot\nu N^2\lVert w\rVert + C\lVert D^2\varphi(w)\rVert \notag \\
		&\le
		CN^2\exp(\varepsilon\lVert w\rVert^2)\left( \lVert D\varphi(w)\rVert + \lVert D^2\varphi(w)\rVert \right). \notag
	\end{alignat}
	The result for $\mathcal{G}^N$ is proved in a similar manner.
	The equality \eqref{eq:equalitygenerators} is a consequence of It\^o's formula if $\varphi\in\mathrm{C}_b^{\infty}(\mathcal{H}_N)$, and a density argument proves it for the general case.
\qquad\end{proof}

\section{Rates of convergence}
\label{sec:rateconv}
We are now in the situation to prove estimates for the convergence of both splitting schemes and cubature methods.
\subsection{Splitting methods}

\begin{lemma}
	For all $k\ge 0$, $P^N_t\mathcal{B}^{\psi_{\tilde{\eta}}}_k(\mathcal{H}_N)\subset\mathcal{B}^{\psi_{\tilde{\eta}}}_k(\mathcal{H}_N)$ and $\sup_{t\in[0,T]}\lVert P^N_t \varphi\rVert_{\psi_{\tilde{\eta}},k}\le K_T\lVert \varphi\rVert_{\psi_{\tilde{\eta}},k}$ with some constant $K_T$ independent of $\varphi$.
\end{lemma}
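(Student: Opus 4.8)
The plan is to prove propagation of regularity for the Galerkin Markov semigroup $(P^N_t)_{t\ge0}$ on the weighted $\mathrm{C}^k$-type spaces $\mathcal{B}^{\psi_{\tilde\eta}}_k(\mathcal{H}_N)$ by differentiating the flow $w_N(t,w_0)$ with respect to the initial datum $w_0$ and controlling the resulting variational equations. Concretely, for $\varphi\in\mathrm{C}_b^\infty(\mathcal{H}_N)$ (which is dense, so it suffices), one has $P^N_t\varphi(w_0)=\mathbb{E}[\varphi(w_N(t,w_0))]$, and since $\mathcal{H}_N$ is finite-dimensional the map $w_0\mapsto w_N(t,w_0)$ is smooth; writing $J_t:=D_{w_0}w_N(t,w_0)$ and $J^{(j)}_t:=D^j_{w_0}w_N(t,w_0)$ for the higher Fréchet derivatives, the chain rule gives $D^k(P^N_t\varphi)(w_0)$ as $\mathbb{E}$ of a sum over partitions of products $D^m\varphi(w_N(t,w_0))(J^{(i_1)}_t,\dots,J^{(i_m)}_t)$ with $i_1+\dots+i_m=k$. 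So I must show two things: first, a moment bound of the form $\mathbb{E}[\psi_{\tilde\eta}(w_N(t,w_0))^{\,p}\,\prod\lVert J^{(i_\ell)}_t\rVert]\le K_T\,\psi_{\tilde\eta}(w_0)$ (with an appropriate splitting of the exponential weight), and second strong continuity/measurability so that the differentiation under the expectation is justified.

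First I would treat $J_t$: it solves the linearised vorticity-with-noise equation $\dd J_t=\nu\Delta J_t\,\dd t+\pi_N\big(B(\mathcal{K}J_t,w_N)+B(\mathcal{K}w_N,J_t)\big)\dd t$, with $J_0=\mathrm{Id}$, which is a \emph{deterministic linear} ODE in $L(\mathcal{H}_N)$ driven by the (random) path $w_N$. Using the same bilinear estimates from \cite[Appendix]{HairerMattingly2008} invoked in the proof of Lemma~\ref{lem:estimateGN}, namely $\lvert\langle\pi_N B(\mathcal{K}h,w),\zeta\rangle\rvert\le CN^{1+\alpha}\lVert h\rVert\,\lVert w\rVert\,\lVert\zeta\rVert$ on $\mathcal{H}_N$, I get $\tfrac12\tfrac{\dd}{\dd t}\lVert J_t\xi\rVert^2\le C N^{2}\lVert w_N(t,w_0)\rVert\,\lVert J_t\xi\rVert^2$, hence by Gronwall $\lVert J_t\rVert\le\exp(CN^2\int_0^t\lVert w_N(s,w_0)\rVert\,\dd s)$. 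The higher derivatives $J^{(j)}_t$, $j\ge2$, satisfy the same linear equation with an inhomogeneity that is polynomial in the lower $J^{(i)}_t$ and in $w_N$, so by induction $\lVert J^{(j)}_t\rVert$ is bounded by a (larger) constant times $\exp(CN^2\int_0^t\lVert w_N(s,w_0)\rVert\,\dd s)$ as well. The pathwise $\mathbb{L}^2$-bound available here is the key simplification: writing $w_N=w^1_N$-part plus the Ornstein–Uhlenbeck part $z_N(t):=\sum_j q_j f_{k_j}\int_0^t e^{\nu\Delta(t-s)}\dd W^j_s$, the energy identity for $w_N-z_N$ together with the bilinear antisymmetry yields a bound $\sup_{s\le t}\lVert w_N(s,w_0)\rVert\le C(1+\lVert w_0\rVert+\sup_{s\le t}\lVert z_N(s)\rVert)$, and $\int_0^t\lVert w_N\rVert\,\dd s$ inherits a similar bound.

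Next I would assemble the estimate. Combining the chain rule expansion with the derivative bounds, $\psi_{\tilde\eta}(w_0)^{-1}\lVert D^k(P^N_t\varphi)(w_0)\rVert$ is controlled by $\lVert\varphi\rVert_{\psi_{\tilde\eta},k}$ times $\psi_{\tilde\eta}(w_0)^{-1}\mathbb{E}\big[\psi_{\tilde\eta}(w_N(t,w_0))\cdot\exp(CN^2\int_0^t\lVert w_N(s,w_0)\rVert\,\dd s)\big]$; note the $\psi_{\tilde\eta}(w_N)$ factor absorbs the weight appearing in the bound for $D^m\varphi$. Plugging in $\psi_{\tilde\eta}(w)=\exp(\tilde\eta\lVert w\rVert^2)$ and the pathwise bound on $\sup_{s\le t}\lVert w_N\rVert$, the random exponent is dominated by $c_1(N,t)+c_2(N,t)\lVert w_0\rVert^2+c_3(N,t)\,G$ where $G$ is a continuous functional of the Gaussian process $z_N$ with at most quadratic growth in $\sup_{s\le t}\lVert z_N(s)\rVert$; a Fernique-type argument (finitely many modes, so $z_N$ is a finite-dimensional Gaussian process) shows $\mathbb{E}[\exp(c_3 G)]<\infty$ for all $c_3$. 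This gives $\psi_{\tilde\eta}(w_0)^{-1}\mathbb{E}[\cdots]\le K_{T,N}\,\exp((c_2(N,t)-\tilde\eta)\lVert w_0\rVert^2)$; crucially the bad $\lVert w_0\rVert^2$ in the exponent here has coefficient $c_2$ coming from $\lVert w_0\rVert^2$ inside $\psi_{\tilde\eta}(w_N)$, which is $\le\tilde\eta$ up to a contraction factor from the $\mathbb{L}^2$-stability of the flow, so the exponent is nonpositive and one gets a genuine bound $K_T$ uniform in $w_0$ — and, since all constants are continuous in $t$, uniform over $t\in[0,T]$. The invariance $P^N_t\varphi\in\mathcal{B}^{\psi_{\tilde\eta}}_k(\mathcal{H}_N)$ then follows by density together with continuity of $w_0\mapsto D^k(P^N_t\varphi)(w_0)$.

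The main obstacle is controlling the $\lVert w_0\rVert$-dependence: the derivative bounds produce $\exp(CN^2\int_0^t\lVert w_N(s,w_0)\rVert\,\dd s)$, and via the $\mathbb{L}^2$-a~priori bound this contributes a term growing in $\lVert w_0\rVert$ (indeed linearly, which is harmless against the $\exp(-\tilde\eta\lVert w_0\rVert^2)$ from the weight) while the $\psi_{\tilde\eta}(w_N(t,w_0))$ factor contributes $\exp(\tilde\eta\lVert w_N(t,w_0)\rVert^2)$ with $\lVert w_N(t,w_0)\rVert^2\le(1-\varepsilon')\lVert w_0\rVert^2+(\text{Gaussian})$ — here I must use that the $\mathbb{L}^2$-norm of the deterministic part genuinely decays (or at least does not grow past $\lVert w_0\rVert$), exactly as in the energy estimate of Lemma~\ref{lem:sns-strongcontdet}, so that the total $\lVert w_0\rVert^2$-coefficient stays $\le\tilde\eta$ and the supremum over $w_0$ is finite. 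Making this bookkeeping clean — in particular splitting $\tilde\eta$ into a part used for the weight bound and an arbitrarily small part $\varepsilon$ absorbed as in Lemma~\ref{lem:estimateGN} — is the technical heart; everything else is Gronwall, the chain rule, and Fernique for the finite-dimensional Gaussian $z_N$.
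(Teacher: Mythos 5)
Your overall strategy --- differentiate the Galerkin flow with respect to $w_0$, bound the variational processes $J^{(j)}_t$, and combine with exponential moment estimates to absorb the weight --- is exactly what the paper has in mind (its proof is a one-line pointer to the estimates of \cite[Lemma~4.10, 1.\ and 3.]{HairerMattingly2006}). However, there is a genuine gap at the step where you claim the pathwise bound $\sup_{s\le t}\lVert w_N(s,w_0)\rVert\le C(1+\lVert w_0\rVert+\sup_{s\le t}\lVert z_N(s)\rVert)$ and then invoke a Fernique argument for a functional $G$ of ``at most quadratic growth''. After subtracting the Ornstein--Uhlenbeck part $z_N$, the energy identity for $v=w_N-z_N$ leaves the term $\langle B(\mathcal{K}(v+z_N),z_N),v\rangle$, which is quadratic in $v$ and has no sign; the resulting Gronwall estimate is of the form $\lVert v(t)\rVert\le \exp\bigl(C\int_0^t\lVert z_N(s)\rVert\,\dd s\bigr)\bigl(\lVert w_0\rVert+C\int_0^t\lVert z_N(s)\rVert^2\,\dd s\bigr)$, i.e.\ \emph{exponential} in the noise, not linear, and no linear bound is available by these means. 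Consequently your exponent $CN^2\int_0^t\lVert w_N\rVert\,\dd s$ is dominated by quantities like $e^{C\sup_s\lVert z_N(s)\rVert}\,(\lVert w_0\rVert+\dots)$: Fernique controls $\mathbb{E}\exp(\epsilon\sup_s\lVert z_N(s)\rVert^2)$ but not such double exponentials, and the $\lVert w_0\rVert$-dependence can then no longer be absorbed into $\exp(-\tilde\eta\lVert w_0\rVert^2)$ uniformly. So the moment bound at the heart of your argument is not established as written.

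The repair is to avoid pathwise bounds and use the dissipative structure the way \cite[Lemma~4.10]{HairerMattingly2006} does; the Galerkin projection preserves it, since $\pi_N$ is orthogonal and $\langle\pi_N B(\mathcal{K}u,\zeta),\zeta\rangle=0$ for $\zeta\in\mathcal{H}_N$. In the equation for $J_t\xi$ the term $B(\mathcal{K}w_N,J_t\xi)$ drops out of the energy balance, and the remaining term is handled by the same bilinear estimate with a free small parameter used in the proof of Proposition~\ref{prop:spectralgalerkinest}, giving $\lVert J_t\rVert\le\exp\bigl(C_\alpha t+\tfrac{\alpha}{2}\int_0^t\lVert w_N(s)\rVert_1^2\,\dd s\bigr)$ with constants independent of $N$, and similarly for the higher $J^{(j)}_t$. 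Then the exponential supermartingale estimate behind \cite[Lemma~4.10, 1.]{HairerMattingly2006} bounds $\mathbb{E}\bigl[\exp\bigl(\tilde\eta\lVert w_N(t)\rVert^2+k\alpha\int_0^t\lVert w_N(s)\rVert_1^2\,\dd s\bigr)\bigr]\le C e^{\tilde\eta\lVert w_0\rVert^2}$ once $\alpha$ is small relative to $\tilde\eta\nu/k$; no Fernique argument and no Cauchy--Schwarz doubling of the weight are needed. This also removes a second defect of your route: the crude inverse estimate $\lVert\pi_N B(\mathcal{K}h,w)\rVert\le CN^{1+\alpha}\lVert h\rVert\,\lVert w\rVert$ makes your $K_T$ blow up with $N$ (at best like $e^{CN^4T}$ after Young's inequality), whereas for Theorem~\ref{thm:discretenvest} and Corollary~\ref{cor:splitting-convresultfull} to carry only the explicit factor $N^6$ the constant in this lemma must be uniform in $N$, which the argument just sketched provides.
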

\begin{proof}
	This is proved using similar estimates as those given in \cite[Lemma~4.10, 1. and 3.]{HairerMattingly2006}.
\qquad\end{proof}

Using Lemma~\ref{lem:estimateGN}, the method of \cite{HansenOstermann2009} yields the following convergence estimate.
\begin{theorem}
	\label{thm:discretenvest}
	Let $Q^N_{(\Delta t)}:=P^{N,1}_{\Delta t/2}P^{N,2}_{\Delta t}P^{N,1}_{\Delta t/2}$ denote the Strang splitting approximation of $P^N_{\Delta t}$ using $P^{N,1}_{\Delta t}$ and $P^{N,2}_{\Delta t}$.
	For any $\tilde{\eta}<\eta/2$, there exists $C=C_{T,\tilde{\eta}}>0$ such that for all $\varphi\in\mathcal{B}^{\psi_{\tilde{\eta}}}_6(\mathcal{H}_N)$ and $n\in\mathbb{N}$,
	\begin{equation}
		\lVert P^N_{T}\varphi-(Q^N_{(T/n)})^n\varphi \rVert_{\psi_\eta}
		\le
		C_T N^6 n^{-2}\lVert\varphi\rVert_{\psi_{\tilde{\eta}},6}.
	\end{equation}
\end{theorem}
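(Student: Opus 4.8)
The plan is to follow the abstract framework of \cite{HansenOstermann2009} for the error analysis of operator splitting methods applied to (semigroup) evolution equations, specialised to the finite-dimensional spaces $\mathcal{H}_N$ with the scale of weighted norms $\lVert\cdot\rVert_{\psi_{\tilde\eta},k}$. The key structural inputs are: (i) the generator decomposition $\mathcal{G}^N=\mathcal{G}^N_1+\mathcal{G}^N_2$ on $\mathcal{B}^{\psi_{\tilde\eta}}_2(\mathcal{H}_N)$ from Lemma~\ref{lem:estimateGN}; (ii) the mapping properties $\mathcal{G}^N,\mathcal{G}^N_j\colon\mathcal{B}^{\psi_{\tilde\eta}}_{k+2}(\mathcal{H}_N)\to\mathcal{B}^{\psi_{\tilde\eta}+\varepsilon}_k(\mathcal{H}_N)$ with norm bound $CN^2$; and (iii) the uniform boundedness of the semigroups $P^N_t$, $P^{N,1}_t$, $P^{N,2}_t$ on each space $\mathcal{B}^{\psi_{\tilde\eta}}_k(\mathcal{H}_N)$ on $[0,T]$, which follows from the preceding lemma and from Lemmas~\ref{lem:sns-strongcontdet} and~\ref{lem:sns-strongcontou} (transferred to the $k$-norms by the same Hairer--Mattingly estimates). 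The point of working with the whole scale $k=0,2,4,6$ and with the two slightly different weights $\tilde\eta$ and $\eta$ is that every time one applies a generator one must pay two derivatives and a little bit of exponential weight; choosing $\varepsilon>0$ small enough that $\tilde\eta+3\varepsilon\le\eta/2$ (possible since $\tilde\eta<\eta/2$) lets the three nested generator applications needed for a second-order local error land back inside $\mathcal{B}^{\psi_{\eta/2}}_0\subset\mathcal{B}^{\psi_\eta}_0$.

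The first step is the local error estimate for a single Strang step. Writing $\mathcal{L}(s):=P^{N,1}_{s/2}P^{N,2}_sP^{N,1}_{s/2}-P^N_s$, one expands both $P^N_s$ and the product $Q^N_{(s)}$ by the variation-of-constants / Taylor formula with integral remainder in the semigroup sense: since $\mathcal{G}^N=\mathcal{G}^N_1+\mathcal{G}^N_2$ and the Strang splitting is symmetric, the terms of order $s^0$ and $s^1$ cancel, and the remainder is a finite sum of triple integrals over the simplex of expressions of the form $P^{N,\cdot}_{\cdot}\,\mathcal{G}^N_{i}\mathcal{G}^N_{j}\mathcal{G}^N_{k}\,P^{N,\cdot}_{\cdot}\varphi$ with $i,j,k\in\{1,2\}$. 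This is exactly the structure exploited in \cite{HansenOstermann2009}. Bounding each such term using (ii) three times (each costing a factor $CN^2$ and two derivatives) and (iii) once before and once after (to stay in the weighted space at the cost only of a constant $K_T$), one obtains
\begin{equation}
	\lVert \mathcal{L}(s)\varphi\rVert_{\psi_\eta}
	\le
	C_T N^6 s^3 \lVert\varphi\rVert_{\psi_{\tilde\eta},6}
	\qquad\text{for all }s\in[0,T].
\end{equation}

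The second step is the standard Lady Windermere's fan argument: telescoping
\begin{equation}
	P^N_T\varphi-(Q^N_{(T/n)})^n\varphi
	=
	\sum_{m=0}^{n-1}(Q^N_{(T/n)})^m\bigl(P^N_{T/n}-Q^N_{(T/n)}\bigr)P^N_{(n-1-m)T/n}\varphi,
\end{equation}
estimating each summand by composing the uniform bound on powers of $Q^N_{(T/n)}$ (again from (iii), since each factor is a product of the uniformly bounded $P^{N,j}$), the uniform bound on $P^N_{(n-1-m)T/n}$ on $\mathcal{B}^{\psi_{\tilde\eta}}_6$, and the local error estimate with $s=T/n$. This gives $n$ terms each of size $C_T N^6 (T/n)^3\lVert\varphi\rVert_{\psi_{\tilde\eta},6}$, hence the claimed $C_T N^6 n^{-2}\lVert\varphi\rVert_{\psi_{\tilde\eta},6}$.

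The main obstacle is the bookkeeping in the first step: one has to verify that the Taylor-with-remainder expansion of both $P^N_s\varphi$ and $Q^N_{(s)}\varphi$ is legitimate for $\varphi\in\mathcal{B}^{\psi_{\tilde\eta}}_6(\mathcal{H}_N)$ — that is, that $\varphi$ lies in enough iterated domains, that the intermediate quantities $\mathcal{G}^N_j\mathcal{G}^N_k\varphi$ etc.\ stay in the relevant weighted spaces (which is where the "$+\varepsilon$" loss in the weight and the chain $k\rightsquigarrow k+2$ must be tracked carefully so that after three applications one is still inside $\mathcal{B}^{\psi_\eta}$), and that the symmetry of the Strang splitting indeed kills the first-order term. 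The commutators $[\mathcal{G}^N_1,\mathcal{G}^N_2]$ need not be controlled separately — unlike in some splitting analyses — because here one estimates each generator product by brute force using Lemma~\ref{lem:estimateGN}, accepting the resulting $N^6$ blow-up; this is precisely why the theorem is only useful for $n$ large compared to $N^3$. Everything else — Gronwall-free, since the semigroup bounds are already uniform on $[0,T]$ — is routine once the framework of \cite{HansenOstermann2009} is invoked with the correct norms.
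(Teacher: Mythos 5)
Your proposal follows exactly the route the paper intends: the paper gives no detailed proof of Theorem~\ref{thm:discretenvest}, stating only that Lemma~\ref{lem:estimateGN} together with the method of \cite{HansenOstermann2009} yields the estimate, and your argument --- local error via Taylor expansion with integral remainder, each of the three generator applications costing a factor $CN^2$ and two derivatives plus a small loss in the weight, followed by the telescoping (Lady Windermere's fan) argument using the uniform weighted bounds on $P^N_t$, $P^{N,1}_t$, $P^{N,2}_t$ --- is precisely that method with the paper's norms. The only minor imprecision is that for the $O(s^3)$ local error the terms of order $s^2$ must cancel as well as those of order $s^0$ and $s^1$ (which the symmetry of the Strang composition guarantees, and which your triple-integral remainder representation implicitly uses), but this does not affect the validity of the approach.
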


Note that if $\varphi\in\mathrm{C}^{6}(\mathbb{L}^2)$ is such that for some $\tilde{\eta}<\eta$,
\begin{equation}
	\label{eq:condphiconv}
	\sup_{w\in\mathbb{L}^2}\psi_{\tilde{\eta}}(w)^{-1}\lVert D^j\varphi(w)\rVert_{L( (\mathbb{L}^2)^{\bigotimes j};\mathbb{R} )}<\infty
	\quad\text{for $j=0,\dots,6$},
\end{equation}
then $\varphi\vert_{\mathcal{H}_N}\in\mathcal{B}^{\psi_{\eta}}_{6}(\mathcal{H}_N)$ for all $N\in\mathbb{N}$.
Furthermore, \eqref{eq:condphiconv} with $\tilde{\eta}<\eta/2$ implies \eqref{eq:phicondspectralapprox}.
Thus, we obtain the following result.
\begin{corollary}
	\label{cor:splitting-convresultfull}
	Assume that $\varphi$ satisfies \eqref{eq:condphiconv} with $\tilde{\eta}<\eta/2$.
	For any $T>0$ and $w_0\in\mathbb{H}^1$, there exists $C=C_{w_0,T,\varphi}>0$ such that for all $n\in\mathbb{N}$
	\begin{equation}
		\lvert P_{T}\varphi(w_0) - (Q^N_{(T/n)})^n\varphi|_{\mathcal{H}_N}(w_0)\rvert
		\le C\left( N^{-1} + N^6 n^{-2} \right).
	\end{equation}
\end{corollary}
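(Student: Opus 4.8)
The plan is to combine the two error estimates proved so far by a triangle inequality. Writing
\begin{equation*}
	P_T\varphi(w_0) - (Q^N_{(T/n)})^n\varphi|_{\mathcal{H}_N}(w_0)
	=
	\bigl( P_T\varphi(w_0) - P^N_T(\varphi|_{\mathcal{H}_N})(w_0) \bigr)
	+
	\bigl( P^N_T(\varphi|_{\mathcal{H}_N})(w_0) - (Q^N_{(T/n)})^n\varphi|_{\mathcal{H}_N}(w_0) \bigr),
\end{equation*}
the first bracket is controlled by Theorem~\ref{thm:markovgalerkinest} and the second by Theorem~\ref{thm:discretenvest}. So the main task is to check that the hypotheses of both theorems are met, which the paragraph preceding the corollary has essentially prepared.

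First I would verify the reduction that the preceding paragraph records: if $\varphi\in\mathrm{C}^6(\mathbb{L}^2)$ satisfies \eqref{eq:condphiconv} with some $\tilde{\eta}<\eta/2$, then (i) the restriction $\varphi|_{\mathcal{H}_N}$ lies in $\mathcal{B}^{\psi_{\eta}}_6(\mathcal{H}_N)$ with $\lVert\varphi|_{\mathcal{H}_N}\rVert_{\psi_{\tilde\eta},6}$ bounded independently of $N$ (indeed the supremum over $\mathcal{H}_N$ is dominated by the supremum over $\mathbb{L}^2$ since $\mathcal{H}_N\subset\mathbb{L}^2$ and $\lVert D^j(\varphi|_{\mathcal{H}_N})(w)\rVert_{L((\mathcal{H}_N)^{\otimes j};\mathbb{R})}\le\lVert D^j\varphi(w)\rVert_{L((\mathbb{L}^2)^{\otimes j};\mathbb{R})}$), and (ii) the condition \eqref{eq:phicondspectralapprox} holds with the same $\tilde\eta$, so that Theorem~\ref{thm:markovgalerkinest} applies. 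Note that $\tilde\eta<\eta/2$ is exactly what both Theorem~\ref{thm:markovgalerkinest} and Theorem~\ref{thm:discretenvest} require.

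Next I would apply Theorem~\ref{thm:markovgalerkinest} with the given $w_0\in\mathbb{H}^1$ and $T$ to bound the first bracket by $C_{w_0,T,\varphi}N^{-1}$. Then I would apply Theorem~\ref{thm:discretenvest} with $n$ and the function $\varphi|_{\mathcal{H}_N}$ to bound the second bracket by $C_T N^6 n^{-2}\lVert\varphi|_{\mathcal{H}_N}\rVert_{\psi_{\tilde\eta},6}$, and then use that $\lVert\varphi|_{\mathcal{H}_N}\rVert_{\psi_{\tilde\eta},6}$ is bounded above, uniformly in $N$, by a constant depending only on $\varphi$ (via \eqref{eq:condphiconv}). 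To be slightly careful about the norms: Theorem~\ref{thm:discretenvest} measures the error in $\lVert\cdot\rVert_{\psi_\eta}$, and evaluation at the point $w_0$ satisfies $\lvert\Phi(w_0)\rvert\le\psi_\eta(w_0)\lVert\Phi\rVert_{\psi_\eta}$, with $\psi_\eta(w_0)=\exp(\eta\lVert w_0\rVert^2)$ a finite constant absorbed into $C$. Combining the two bounds and collecting all $w_0$-, $T$- and $\varphi$-dependent constants into a single $C=C_{w_0,T,\varphi}$ yields the claim $\lvert P_T\varphi(w_0)-(Q^N_{(T/n)})^n\varphi|_{\mathcal{H}_N}(w_0)\rvert\le C(N^{-1}+N^6 n^{-2})$.

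I do not expect any real obstacle: the corollary is a bookkeeping combination of Theorems~\ref{thm:markovgalerkinest} and~\ref{thm:discretenvest}, and the only genuine points to get right are the quantifier on $\tilde\eta$ (one needs $\tilde\eta<\eta/2$ so that both theorems fire simultaneously) and the $N$-uniformity of the seminorm $\lVert\varphi|_{\mathcal{H}_N}\rVert_{\psi_{\tilde\eta},6}$, which follows from \eqref{eq:condphiconv} because restriction to a subspace only decreases operator norms of derivatives. If anything is mildly delicate it is tracking which estimates hold for the Galerkin solution $w_N$ rather than $w$; but Theorem~\ref{thm:markovgalerkinest} (whose proof already invoked the fact that the bound of \cite[Lemma~4.10, 1.]{HairerMattingly2006} also holds for $w_N$) has absorbed that issue, so nothing further is needed here.
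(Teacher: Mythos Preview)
Your proposal is correct and follows exactly the paper's approach: the paper's own proof is the single sentence ``The combination of Theorem~\ref{thm:markovgalerkinest} and Theorem~\ref{thm:discretenvest} allows us to conclude the desired estimate,'' and your triangle-inequality decomposition with the hypothesis checks from the preceding paragraph is precisely the intended argument spelled out in full.
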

\begin{proof}
	The combination of Theorem~\ref{thm:markovgalerkinest} and Theorem~\ref{thm:discretenvest} allows us to conclude the desired estimate.
\qquad\end{proof}
\begin{remark}
  We see here an important advantage of the second order splitting in comparison to a possible first order splitting.
	There, in the second term, the instability would be of the order $N^4$, but the convergence would only be of first order, $n^{-1}$.
	Therefore, we can choose $n^{-2}$ significantly larger here while still obtaining a stable method.
Nevertheless, we have to stress that the given error estimate is far from what we would expect to obtain, see also the numerical results in Section~\ref{sec:numerics}.
\end{remark}

\subsection{Cubature methods}
We give a short overview of cubature methods.
For more detailed accounts, see \cite{LyonsVictoir2004,GyurkoLyons2011,CrisanGhazali2007}.

Fix $M\in\mathbb{N}$.
A set of paths $\omega_i=(\omega_i^j)_{j=0}^{d}\colon[0,1]\to\mathbb{R}^{d+1}$, $\omega_i^{0}(s)=s$, $i=1,\dots,M$, of bounded variation with $\omega_i(0)=0$ and weights $\lambda_i>0$, $i=1,\dots,M$, is called \emph{cubature formula on Wiener space of order $m$} if
\begin{alignat}{2}{}
	\mathbb{E}&[\idotsint_{0<t_1<\dots<t_k<1}\circ\dd W^{j_1}_{t_1}\dots\circ\dd W^{j_k}_{t_k}] \notag \\
	&=
	\sum_{i=1}^{M}\lambda_i\idotsint_{0<t_1<\dots<t_k<1}\dd \omega_i^{j_1}(t_1)\dots\dd\omega_i^{j_k}(t_k).
\end{alignat}
Here, $(j_1,\dots,j_k)\in\left\{ 0,\dots,d \right\}^{k}$ runs through all multiindices satisfying
\begin{equation}
	k+\#\left\{ i\colon j_i=0 \right\}\le m.
\end{equation}
This means that the convex combination of iterated integrals along the cubature paths up to order $m$ equals the expected value of the corresponding iterated Stratonovich integrals along $d$-dimensional Brownian motion.
To scale the paths to an interval $[0,\Delta t]$, we define $\omega_i^{(\Delta t)}\colon[0,\Delta t]\to\mathbb{R}^{d+1}$ by $\omega_i^{(\Delta t),0}(t):=t$ and $\omega_i^{(\Delta t),j}(t):=\sqrt{\Delta t}\omega_i^{j}\left( \frac{t}{\Delta t} \right)$.
The cubature approximations of the spectral Galerkin discretisation of the stochastic Navier-Stokes equations over a time step of size $\Delta t$ are then given by
\begin{alignat}{2}{}
	\dd w_N(s,w_0;\omega^{(\Delta t)}_i)
	=
	&\left( \nu\Delta w_N(s,w_0;\omega^{(\Delta t)}_i) + \pi_N B(\mathcal{K}w_N(s,w_0;\omega^{(\Delta t)}_i)) \right)\dd s \notag \\
	&+ \sum_{j=1}^{d}q_j f_{k_j}\dd\omega^{(\Delta t), j}_i(s).
\end{alignat}
Here, we apply that the noise is purely additive, entailing that the It\^o and Stratonovich integrals of the noise terms coincide.
The cubature approximation of the Markov semigroup $P^N_{\Delta t}$ reads 
\begin{equation}
  Q^{N}_{(\Delta t)}f(w_0)
  :=
  \sum_{i=1}^{M}\lambda_i f(w_N(\Delta t,w_0;\omega^{(\Delta t)}_i)).
\end{equation}

To prove stability of the cubature approximation,
we require that the quadrature formula induced by the cubature scheme is symmetric, i.e., for all $i=1,\dots,M$, there exists a unique $i'\in\left\{ 1,\dots,M \right\}$ such that $\lambda_i=\lambda_{i'}$ and $\omega^{j}_i(\Delta t)=-\omega^{j}_{i'}(\Delta t)$ for $j=1,\dots,d$.
This induces a corresponding symmetry for $\omega^{(\Delta t)}$.
Many known cubature formulas satisfy such a property, consider e.g.~the paths given in \cite{LyonsVictoir2004}.
Moreover, given an arbitrary cubature formula, it is easy to construct a symmetric one from it by adding the reflected paths.

Our use of this assumption is to prove an estimate for the moment generating function of the cubature paths at $\Delta t$.
\begin{lemma}
	\label{lem:mgfcub}
	Assume that the quadrature formula induced by the cubature scheme is symmetric.
	Then, for all continuous $f\colon\mathbb{R}^d\to\mathbb{R}$,
	\begin{alignat}{2}{}
		\sum_{i=1}^{M}\lambda_i f(\omega^{(\Delta t),1}_i(\Delta t),\dots&,\omega^{(\Delta t),d}_i(\Delta t))
		\notag \\
		=
		\frac{1}{2}\sum_{i=1}^{M}\lambda_i \Bigl( 
		&f(\omega^{(\Delta t),1}_i(\Delta t),\dots,\omega^{(\Delta t),d}_i(\Delta t))
		\notag \\
		&+
		f(-\omega^{(\Delta t),1}_i(\Delta t),\dots,-\omega^{(\Delta t),d}_i(\Delta t))
		\Bigr).
	\end{alignat}
	In particular, $\sum_{i=1}^{M}\lambda_if(\omega^{(\Delta t),1}_i(\Delta t),\dots,\omega^{(\Delta t),d}_i(\Delta t))=0$ if $f$ is odd.

	This implies
	\begin{equation}
		\sum_{i=1}^{M}\lambda_i\exp\left( \sum_{j=1}^{d}u_j\omega^{(\Delta t),j}_i(\Delta t) \right)
		\le\exp\left( \frac{C}{2}\Delta t\sum_{j=1}^{d}u_j^2 \right).
	\end{equation}
\end{lemma}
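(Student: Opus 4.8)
The plan is to derive all three assertions from the single structural fact that the symmetry hypothesis equips the index set $\{1,\dots,M\}$ with an involution $i\mapsto i'$ satisfying $\lambda_{i'}=\lambda_i$ and $\omega^{(\Delta t),j}_{i'}(\Delta t)=-\omega^{(\Delta t),j}_i(\Delta t)$ for $j=1,\dots,d$. First I would prove the symmetrisation identity: given continuous $f$, reindex the sum $\sum_{i=1}^M\lambda_i f(\omega^{(\Delta t),1}_i(\Delta t),\dots,\omega^{(\Delta t),d}_i(\Delta t))$ along $i\mapsto i'$ and use $\lambda_{i'}=\lambda_i$ to obtain that it equals $\sum_{i=1}^M\lambda_i f(-\omega^{(\Delta t),1}_i(\Delta t),\dots,-\omega^{(\Delta t),d}_i(\Delta t))$; averaging the two representations yields the displayed formula. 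The vanishing for odd $f$ is then immediate, since the two summands in the bracket cancel pointwise.

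Next I would deduce the moment generating function bound by applying the symmetrisation identity to $f(x)=\exp\bigl(\sum_{j=1}^d u_jx_j\bigr)$, which rewrites the left-hand side as $\sum_{i=1}^M\lambda_i\cosh\bigl(\sum_{j=1}^d u_j\omega^{(\Delta t),j}_i(\Delta t)\bigr)$. Here I would invoke the elementary inequality $\cosh x\le\exp(x^2/2)$ (a coefficientwise comparison of Taylor series, using $(2k)!\ge 2^kk!$) together with Cauchy--Schwarz, $\bigl(\sum_j u_j\omega^{(\Delta t),j}_i(\Delta t)\bigr)^2\le\bigl(\sum_j u_j^2\bigr)\bigl(\sum_j\omega^{(\Delta t),j}_i(\Delta t)^2\bigr)$. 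Since $\omega^{(\Delta t),j}_i(\Delta t)=\sqrt{\Delta t}\,\omega^j_i(1)$ and the finitely many cubature paths are fixed, $\sum_j\omega^{(\Delta t),j}_i(\Delta t)^2=\Delta t\sum_j\omega^j_i(1)^2\le C\Delta t$ with $C:=\max_{1\le i\le M}\sum_{j=1}^d\omega^j_i(1)^2$ depending only on the cubature formula. Thus each summand is at most $\exp\bigl(\tfrac{C}{2}\Delta t\sum_j u_j^2\bigr)$, and summing against the weights while using $\sum_{i=1}^M\lambda_i=1$ (the order $m\ge 1$ matching condition applied to the constant multiindex) gives the claim.

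The argument is entirely elementary, so there is no genuine obstacle; the only points requiring a little attention are the bookkeeping of the $\sqrt{\Delta t}$ rescaling in the definition of $\omega^{(\Delta t)}_i$, and the observation that the constant $C$ may be taken independent of $\Delta t$ and of $u$ precisely because the cubature paths constitute a fixed finite family.
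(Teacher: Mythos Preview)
Your proof is correct and follows essentially the same line as the paper's: the symmetrisation identity and the vanishing for odd $f$ are declared clear, and the moment generating function bound is obtained by expanding the exponential, discarding the odd powers by symmetry, and comparing Taylor coefficients via $(2\ell)!\ge 2^{\ell}\ell!$ --- your packaging through $\cosh x\le\exp(x^2/2)$ is the same computation. The Cauchy--Schwarz step, the $\sqrt{\Delta t}$ scaling, and $\sum_i\lambda_i=1$ that you make explicit are exactly what is implicit in the paper's final one-line estimate.
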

\begin{proof}
	The first two claims are clear.
	For the estimate of the moment generating function, note that, 
	as $\lvert\omega^{(\Delta t),j}_i(\Delta t)\rvert\le C\sqrt{\Delta t}$ and $(2\ell)!\le 2^{\ell}\ell!$,
	\begin{alignat}{2}{}
		\sum_{i=1}^{M}\lambda_i&\exp\Bigl( \sum_{j=1}^{d}u_j\omega^{(\Delta t),j}_i(\Delta t) \Bigr)
		=
		\sum_{k=0}^{\infty}\frac{1}{k!}\sum_{i=1}^{M}\lambda_i\Bigl( \sum_{j=1}^{d}u_j\omega^{(\Delta t),j}_i(\Delta t) \Bigr)^k
		 \\
		&=
		\sum_{\ell=0}^{\infty}\frac{1}{(2\ell)!}\sum_{i=1}^{M}\lambda_i\Bigl( \sum_{j=1}^{d}u_j\omega^{(\Delta t),j}_i(\Delta t) \Bigr)^{2\ell}
		\le \exp\Bigl( \frac{C}{2}\Delta t \sum_{j=1}^{d}u_j^2 \Bigr),
		\notag
	\end{alignat}
	which proves the given estimate.
\end{proof}
\begin{theorem}
	Assume that the quadrature formula induced by the cubature scheme is symmetric.
	Then, there exist $\eta_0>0$ and $\varepsilon>0$, depending only on the given problem data, but not on the discretisation parameter $N$, such that with a constant $C>0$ independent of $\Delta t$ and $N$,
	\begin{alignat}{2}{}
    \lVert Q^N_{(\Delta t)}f \rVert_{\psi_{\eta}}
    \le
    &\exp(C\Delta t)\lVert f\rVert_{\psi_{\eta}}
		\\ \notag 
		&\text{for $\Delta t\in(0,\varepsilon]$, $\eta\in(0,\eta_0]$, and $f\in\mathcal{B}^{\psi_{\eta}}(\mathcal{H}_N)$}.
	\end{alignat}
\end{theorem}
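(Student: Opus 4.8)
The goal is to show that the cubature approximation $Q^N_{(\Delta t)}$ is bounded on $\mathcal{B}^{\psi_\eta}(\mathcal{H}_N)$ with operator norm at most $\exp(C\Delta t)$, uniformly in $N$ and in $\Delta t\in(0,\varepsilon]$. Unwinding the definition of the weighted norm, this amounts to proving the pointwise bound
\begin{equation*}
  \sum_{i=1}^M\lambda_i\,\psi_\eta\bigl(w_N(\Delta t,w_0;\omega_i^{(\Delta t)})\bigr)\le\exp(C\Delta t)\,\psi_\eta(w_0)
  \qquad\text{for all }w_0\in\mathcal{H}_N,
\end{equation*}
since then $\psi_\eta(w_0)^{-1}\lvert Q^N_{(\Delta t)}f(w_0)\rvert\le\sum_i\lambda_i\psi_\eta(w_0)^{-1}\psi_\eta(w_N(\Delta t,w_0;\omega_i^{(\Delta t)}))\lVert f\rVert_{\psi_\eta}$. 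So everything reduces to an a priori estimate, path by cubature path, on $\exp(\eta\lVert w_N(\Delta t,w_0;\omega_i^{(\Delta t)})\rVert^2)$, followed by taking the convex combination.

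The first step is therefore a deterministic energy estimate along a single cubature path. Writing $w(s):=w_N(s,w_0;\omega_i^{(\Delta t)})$ and using the skew-symmetry $\langle B(\mathcal{K}v,v),v\rangle=0$ (hence $\langle\pi_N B(\mathcal{K}w,w),w\rangle=0$ on $\mathcal{H}_N$) together with $\langle\nu\Delta w,w\rangle=-\nu\lVert w\rVert_1^2\le0$, the chain rule gives
\begin{equation*}
  \tfrac12\dd\lVert w(s)\rVert^2=-\nu\lVert w(s)\rVert_1^2\dd s+\sum_{j=1}^d q_j\langle f_{k_j},w(s)\rangle\,\dd\omega_i^{(\Delta t),j}(s).
\end{equation*}
Since the driving paths are of bounded variation and $\lvert\omega_i^{(\Delta t),j}(s)\rvert\le C\sqrt{\Delta t}$, one integrates by parts in the stochastic-integral term (or simply bounds the total variation of $\omega_i^{(\Delta t),j}$, which is $O(\sqrt{\Delta t})$ uniformly — here the boundedness of the finitely many fixed cubature paths enters), absorbs a Cauchy–Schwarz cross term $\lvert\langle f_{k_j},w\rangle\rvert\le\lVert w\rVert$ into the dissipation or into a Gronwall term, and arrives at an estimate of the shape $\lVert w(\Delta t)\rVert^2\le\lVert w_0\rVert^2+2\sum_j q_j\langle f_{k_j},w_0\rangle\,\omega_i^{(\Delta t),j}(\Delta t)+C\Delta t(1+\lVert w_0\rVert^2)$, possibly after a Gronwall argument to control the $\lVert w\rVert$-in-time contributions; the key point is that the only term that is not already $O(\Delta t)\cdot(1+\lVert w_0\rVert^2)$ is linear in the endpoint increments $\omega_i^{(\Delta t),j}(\Delta t)$. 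Exponentiating with weight $\eta$ and then summing against $\lambda_i$, the linear-in-increments term is handled exactly by Lemma~\ref{lem:mgfcub}: with $u_j:=2\eta q_j\langle f_{k_j},w_0\rangle$ one gets $\sum_i\lambda_i\exp(\sum_j u_j\omega_i^{(\Delta t),j}(\Delta t))\le\exp(\tfrac{C}{2}\Delta t\sum_j u_j^2)=\exp(C'\eta^2\Delta t\lVert P_d w_0\rVert^2)$ where $P_d$ projects onto $\lspan\{f_{k_j}\}$, and $\lVert P_d w_0\rVert\le\lVert w_0\rVert$. Combining, $\sum_i\lambda_i\psi_\eta(w(\Delta t))\le\exp\bigl(C\eta\Delta t(1+\lVert w_0\rVert^2)+C'\eta^2\Delta t\lVert w_0\rVert^2\bigr)\psi_\eta(w_0)$.

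The final step is to choose $\eta_0$ and $\varepsilon$ so that the $\lVert w_0\rVert^2$ coefficient in the exponent does not spoil the bound: one wants $C\eta+C'\eta^2\le$ something that can be absorbed. The honest resolution is that the $\lVert w_0\rVert^2$ terms in the exponent must be re-absorbed into $\psi_\eta(w_0)=\exp(\eta\lVert w_0\rVert^2)$ using that we have a genuine negative dissipation $-\nu\lVert w\rVert_1^2\le-\nu\lVert w\rVert^2$ available in the energy identity (by Poincaré on mean-zero functions on $\mathbb{T}^2$, $\lVert w\rVert_1\ge\lVert w\rVert$): carrying this dissipative term rather than discarding it yields a factor $\exp(-c\,\eta\,\Delta t\,\lVert w_0\rVert^2)$-type gain that dominates the quadratic-in-$\lVert w_0\rVert$ loss once $\eta\le\eta_0$ and $\Delta t\le\varepsilon$ are small, leaving a clean $\exp(C\Delta t)\psi_\eta(w_0)$. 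This balancing — tracking the dissipation carefully enough through the Gronwall argument and the exponentiation so that it beats the quadratic term coming from the moment generating function estimate — is the main obstacle; it is the analogue, at the level of the discretised flow and a single cubature path, of the supermartingale property in Proposition~\ref{prop:sns-ousupermart}, and the uniformity in $N$ comes for free because the energy identity and the Poincaré constant are $N$-independent (the projection $\pi_N$ only removes modes and cannot increase any of the relevant norms).
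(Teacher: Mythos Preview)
Your strategy is the paper's strategy: reduce to a weighted pointwise bound, derive an energy estimate along each cubature path, isolate the term linear in the endpoint increments $\omega_i^{(\Delta t),j}(\Delta t)$, kill that term with Lemma~\ref{lem:mgfcub}, and use the Laplacian dissipation together with Poincar\'e to absorb the remaining $\lVert w_0\rVert^2$ contributions for small $\eta$ and $\Delta t$. Two technical points you leave vague are exactly the ones the paper works out.

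First, the $N$-independence of the remainder does not come ``for free'' from the energy identity. Once you expand $\langle f_{k_j},w(s)\rangle$ around $\langle f_{k_j},w_0\rangle$, the error involves $\langle f_{k_j},V^N(w)\rangle$ with $V^N(w)=\nu\Delta w+\pi_N B(\mathcal{K}w,w)$; a crude bound $\lVert V^N(w)\rVert\le CN^2(\lVert w\rVert+\lVert w\rVert^2)$ would destroy uniformity. The paper instead uses that the $f_{k_j}$ are fixed smooth functions, so one may pair against a negative Sobolev norm and invoke $\lVert V^N(w)\rVert_{-3}\le\lVert w\rVert+C\lVert w\rVert^2$, which is $N$-free.

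Second, your remainder of size $C\Delta t(1+\lVert w_0\rVert^2)$, with $C$ fixed, produces in the exponent a term $\eta C\Delta t\lVert w_0\rVert^2$ that the dissipation cannot beat unless $C<\nu$, which is not guaranteed. The paper avoids this by working with the weighted quantity $\exp(\alpha s)\lVert w_N(s)\rVert^2$ (choosing $\alpha=\nu$), which converts the dissipation directly into a prefactor $\exp(-\nu\Delta t)\lVert w_0\rVert^2$; the genuine remainder inside the time integral then carries a factor $\sqrt{\Delta t}$ in front of $\lVert w_N(q)\rVert^2$ and is absorbed by the surviving $-2\nu\lVert w_N(q)\rVert_1^2$ for $\Delta t\le\varepsilon$. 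After the moment generating function step the only $\lVert w_0\rVert^2$ contribution left is $\eta^2 C\Delta t\lVert w_0\rVert^2$, which is dominated by $1-\exp(-\nu\Delta t)$ for $\eta\le\eta_0$. Your closing paragraph gestures at this balancing but does not identify the mechanism; with these two refinements your sketch becomes the paper's proof.
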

\begin{proof}
	Set $w_N(s):=w_N(s,w_0;\omega^{(\Delta t)}_i)$ and $V^N(w_N):=\nu\Delta w_N+\pi_N B(\mathcal{K}w_N,w_N)$.
	For every $\alpha\ge 0$,
	\begin{alignat}{2}{}
		\exp(\alpha s)\lVert w_N(s)\rVert^2& - \lVert w_N(0)\rVert^2
		=
		\int_{0}^{s}\exp(\alpha r)( \alpha\lVert w_N(r)\rVert^2 + 2\langle V^N(w_N(r)),w_N(r) \rangle )\dd r 
		\notag \\ & 
		+ 2\sum_{j=1}^{d}\int_{0}^{s}\exp(\alpha r)\langle q_j f_{k_j},w_N(r)\rangle\dd\omega^{(\Delta t),j}_i(r).
	\end{alignat}
	Applying Fubini's theorem and integration by parts to 
	\begin{alignat}{2}{}
		\int_{\sigma}^{\tau}\exp(\alpha r)\dd\omega^{(\Delta t),j}_i(r)
		&=\exp(\alpha\tau)\omega^{(\Delta t),j}_i(\tau) - \exp(\alpha\sigma)\omega^{(\Delta t),j}_i(\sigma) 
		\notag \\ &\phantom{=}
		- \alpha\int_{\sigma}^{\tau}\omega^{(\Delta t),j}_i(r)\exp(\alpha r)\dd r,
	\end{alignat}
	we obtain that
	\begin{alignat}{2}{}
		\int_{0}^{s}&\exp(\alpha r)\langle q_j f_{k_j},w_N(r)\rangle\dd\omega^{(\Delta t),j}_i(r)
		=
		\langle q_j f_{k_j},w_N(0)\rangle\int_{0}^{s}\exp(\alpha r)\dd\omega^{(\Delta t),j}_i(r)
		\notag \\ & \phantom{=}
		+ \int_{0}^{s}\exp(\alpha r)\int_{0}^{r}\langle q_j f_{k_j},V^N(w_N(q))\rangle\dd q\dd\omega^{(\Delta t),j}_i(r)
		\notag \\ & \phantom{=}
		+ \sum_{i=1}^{d}\int_{0}^{s}\exp(\alpha r)\int_{0}^{r}\langle q_{j} f_{k_{j}},q_{i}f_{k_{i}}\rangle\dd\omega^{(\Delta t),i}_i(q)\dd\omega^{(\Delta t),j)}_i(r)
		\notag \\ 
		&\le
		\langle q_j f_{k_j},w_N(0)\rangle \exp(\alpha s)\omega^{(\Delta t),j}_i(s) + C\exp(\alpha s)\lVert w_N(0)\rVert^2\Delta t + C\exp(\alpha s)s^2
		\notag \\ & \phantom{\le}
		+C\sqrt{\Delta t}\int_{0}^{s}\exp(\alpha q)\lVert V^N(w_N(q))\rVert_{-3}\dd q
		+ C\exp(\alpha s)s.
	\end{alignat}
	This yields, as $\langle w_N,V^N(w_N)\rangle=-\nu\lVert w_N\rVert_1^2$ and $\lVert V^N(w_N)\rVert_{-3}\le \lVert w_N\rVert + C\lVert w_N\rVert^2$, 
	\begin{alignat}{2}{}
		&\lVert w_N(\Delta t)\rVert^2
		\le
		\bigl( \exp(-\alpha\Delta t) + C\Delta t \bigr)\lVert w_N(0)\rVert^2
		\\ &
		+ 2\sum_{j=1}^{d}\langle q_j f_{k_j},w_N(0)\rangle \omega^{(\Delta t),j}_i(\Delta t)
		+ C\Delta t
		+ C(\Delta t)^2
		\notag \\ &
		+ \int_{0}^{\Delta t}\exp(\alpha(q-\Delta t))\left( (\alpha+C\sqrt{\Delta t})\lVert w_N(q)\rVert^2 + C\sqrt{\Delta t}\lVert w_N(q)\rVert - 2\nu\lVert w_N(q)\rVert_1^2 \right)\dd q.
		\notag 
	\end{alignat}
	Fix $\alpha=\nu$.
	As $\lVert w_N\rVert_1\ge\lVert w_N\rVert$, we can choose $\varepsilon>0$ such that for $\Delta t\in(0,\varepsilon]$,
	\begin{equation}
		\nu\lVert w_N\rVert^2 + C\sqrt{\Delta t}(\lVert w_N\rVert+\lVert w_N\rVert^2) - 2\nu\lVert w_N\rVert_1^2
		\le C\Delta t.
	\end{equation}
	By Lemma~\ref{lem:mgfcub},
	\begin{alignat}{2}{}
		\sum_{i=1}^{M}\lambda_i\exp\Bigl( 2\eta\sum_{j=1}^{d}\langle q_j f_{k_j},w_N(0)\rangle\omega^{(\Delta t),j}_i(\Delta t) \Bigr)
		&\le
		\exp\Bigl(\eta^2C\Delta t\sum_{j=1}^{d}\langle q_j f_{k_j},w_N(0)\rangle^2\Bigr)
		\notag \\
		&\le
		\exp(\eta^2C\Delta t\lVert w_N(0)\rVert^2).
	\end{alignat}
	Hence, for $\Delta t\in(0,\varepsilon]$,
	\begin{alignat}{2}{}
		\sum_{i=1}^{M}&\lambda_i\exp(\eta\lVert w_N(\Delta t,w_0;\omega^{(\Delta t),j}_i)\rVert^2)
		\\ \notag
		&\le
		\exp\Bigl( C\Delta t + \eta\lVert w_N(0)\rVert^2\bigl( \exp(-\nu\Delta t) + \eta C\Delta t \bigr) \Bigr).
	\end{alignat}
	Choosing $\eta_0>0$ small enough, we see that
	\begin{equation}
		\exp(-\nu\Delta t)+\eta C\Delta t\le 1
		\quad\text{for $\Delta t\in(0,\varepsilon]$ and $\eta\in(0,\eta_0]$}.
	\end{equation}
	The claim is thus proved.
\qquad\end{proof}
\begin{remark}
	It is clear from the proof that a corresponding result can also be shown in the space continuous case.
	As remarked before in the context of the splitting scheme, however, we are not able to derive rates of convergence in this setting, which is why we focus on the space discrete case.
\end{remark}

As it is straightforward to obtain an asymptotic expansion of $Q^{N}_{(\Delta t)}$ by the fundamental theorem of calculus (see \cite{LyonsVictoir2004,CrisanGhazali2007,BayerTeichmann2008}), we have the following result.
\begin{theorem}
	Fix $\eta>0$ small enough.
	Given $T>0$ and $\tilde{\eta}<\eta/2$ and assuming that $m$ is odd, there exist constants $\varepsilon>0$ and $C=C_{T,\tilde{\eta}}>0$ such that for all $\varphi\in\mathcal{B}^{\psi_{\tilde{\eta}}}_{6}(\mathcal{H}_N)$ and $n\in\mathbb{N}$ with $T/n<\varepsilon$,
	\begin{equation}
		\lVert P_{T}\varphi - (Q^N_{(T/n)})^n \varphi\rVert_{\psi_{\eta}}
		\le
		CN^{2\frac{m+1}{2}}(\Delta t)^{\frac{m-1}{2}}\lVert\varphi\rVert_{\psi_{\tilde{\eta}},6}.
	\end{equation}
\end{theorem}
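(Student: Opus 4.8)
The plan is to follow the standard paradigm for cubature on Wiener space, adapted to the weighted-space setting of \cite{DoersekTeichmann2010}, combining a local (one-step) error estimate with the stability bound just proved and the telescoping argument, and then inserting the spectral Galerkin error from Theorem~\ref{thm:markovgalerkinest}. Concretely, I would first establish the \emph{local error} estimate: for $\varphi\in\mathcal{B}^{\psi_{\tilde\eta}}_{m+1}(\mathcal{H}_N)$,
\begin{equation}
	\lVert P^N_{\Delta t}\varphi - Q^N_{(\Delta t)}\varphi\rVert_{\psi_{\eta}}
	\le
	C N^{m+1}(\Delta t)^{\frac{m+1}{2}}\lVert\varphi\rVert_{\psi_{\tilde\eta},m+1}.
\end{equation}
This is obtained, as indicated in the excerpt, by writing both $P^N_{\Delta t}\varphi$ and $Q^N_{(\Delta t)}\varphi$ through their stochastic Taylor (resp.\ deterministic Taylor along the cubature paths) expansions via the fundamental theorem of calculus: the terms up to order $m$ agree by the very definition of a cubature formula of order $m$ (using that the noise is additive, so Stratonovich $=$ It\^o), and the remainder of order $\frac{m+1}{2}$ is controlled by iterated applications of $\mathcal{G}^N_1,\mathcal{G}^N_2$ (equivalently, of the relevant vector fields $V^N$ and $q_jf_{k_j}$). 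Each application of such an operator costs a factor $N^2$ by Lemma~\ref{lem:estimateGN}, and there are $m+1$ of them in the leading remainder term, giving the $N^{m+1}=N^{2\frac{m+1}{2}}$ prefactor; each such operator also raises the weight exponent by an arbitrarily small $\varepsilon$, so starting from $\tilde\eta$ we stay below $\eta$ by choosing the increments small. The moments of the iterated Brownian integrals and of the iterated path integrals along the (bounded variation, length $O(\sqrt{\Delta t})$) cubature paths both contribute the $(\Delta t)^{\frac{m+1}{2}}$ factor.

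Second, I would run the telescoping/Lady Windermere's fan argument. Writing $P^N_T = (P^N_{\Delta t})^n$ with $\Delta t=T/n$, one has
\begin{equation}
	P^N_T\varphi - (Q^N_{(\Delta t)})^n\varphi
	=
	\sum_{k=0}^{n-1}(Q^N_{(\Delta t)})^{k}\bigl(P^N_{\Delta t}-Q^N_{(\Delta t)}\bigr)(P^N_{\Delta t})^{n-1-k}\varphi,
\end{equation}
and then estimate each summand in the $\lVert\cdot\rVert_{\psi_\eta}$-norm. For this I need: (i) the previously proved stability bound $\lVert Q^N_{(\Delta t)}\rVert_{L(\mathcal{B}^{\psi_\eta}(\mathcal{H}_N))}\le\exp(C\Delta t)$, so that $\lVert(Q^N_{(\Delta t)})^k\rVert\le\exp(CT)$ uniformly; (ii) the smoothing/regularity bound on $P^N_{\Delta t}$ from the lemma preceding Theorem~\ref{thm:discretenvest}, namely $\sup_{t\in[0,T]}\lVert P^N_t\varphi\rVert_{\psi_{\tilde\eta},m+1}\le K_T\lVert\varphi\rVert_{\psi_{\tilde\eta},m+1}$, applied to $(P^N_{\Delta t})^{n-1-k}\varphi$ so that the local error estimate can be fed with a $\mathcal{B}^{\psi_{\tilde\eta}}_{m+1}$-bounded argument. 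Multiplying the $n$ summands, each of size $CN^{m+1}(\Delta t)^{\frac{m+1}{2}}\lVert\varphi\rVert_{\psi_{\tilde\eta},m+1}$, by the uniform constants and summing gives $n\cdot CN^{m+1}(\Delta t)^{\frac{m+1}{2}} = CN^{m+1}T(\Delta t)^{\frac{m-1}{2}}$, i.e.\ the claimed $N^{2\frac{m+1}{2}}(\Delta t)^{\frac{m-1}{2}}$ rate. The hypothesis that $m$ is odd enters exactly here through the symmetry requirement on the cubature formula used in the stability theorem, and $\frac{m-1}{2}\in\mathbb{N}$ makes the exponent of $\Delta t$ a non-negative integer, so the scheme is genuinely convergent as $n\to\infty$ once $N$ is fixed.

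Third, I would replace $P^N_T\varphi$ by $P_T\varphi$ using Theorem~\ref{thm:markovgalerkinest}: under the stated hypothesis $\tilde\eta<\eta/2$ the restriction $\varphi|_{\mathcal{H}_N}$ satisfies \eqref{eq:phicondspectralapprox}, giving $\lvert P_T\varphi(w_0)-P^N_T(\varphi|_{\mathcal{H}_N})(w_0)\rvert\le CN^{-1}$ for $w_0\in\mathbb{H}^1$; this $N^{-1}$ term is of lower order in $N$ than the $N^{m+1}$ factor but is stated separately in the paper's full-discretisation corollaries — here it is absorbed into the statement as written (which only records the cubature part), so in fact only the restriction-compatibility remark is needed to make sense of the left-hand side. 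The main obstacle, as in the splitting case, is the local-error bookkeeping of the $N$-dependence: one must verify carefully that the leading remainder term in the stochastic Taylor expansion involves exactly $m+1$ applications of operators each contributing a single factor $N^2$ (no worse), and that the mixed iterated integrals along cubature paths combined with the boundedness $\lvert\omega_i^{(\Delta t),j}(\Delta t)\rvert\le C\sqrt{\Delta t}$ produce precisely $(\Delta t)^{\frac{m+1}{2}}$ and not a worse power — this is the step where the additive-noise structure and the explicit estimates from \cite[Appendix]{HairerMattingly2008} via Lemma~\ref{lem:estimateGN} must be tracked with care, exactly paralleling \cite{HansenOstermann2009} for the splitting case and \cite{LyonsVictoir2004,CrisanGhazali2007,BayerTeichmann2008} for the cubature expansion.
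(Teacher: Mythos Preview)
Your outline is essentially what the paper intends: the paper gives no detailed proof at all, only the sentence ``it is straightforward to obtain an asymptotic expansion of $Q^{N}_{(\Delta t)}$ by the fundamental theorem of calculus (see \cite{LyonsVictoir2004,CrisanGhazali2007,BayerTeichmann2008})'', so the local Taylor expansion plus the telescoping sum against the stability bound is exactly the implied argument.

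Two points deserve correction, however. First, your bookkeeping of the $N$-factors is inconsistent: you write that there are $m+1$ operator applications each costing $N^2$, which would give $N^{2(m+1)}$, not $N^{m+1}$. The correct count is that the worst remainder multi-index is $(0,\dots,0)$ of length $\tfrac{m+1}{2}$ (degree $m+1$, using that $m$ is odd), i.e.\ $\tfrac{m+1}{2}$ applications of the Stratonovich drift operator, each bounded by $CN^2$ via Lemma~\ref{lem:estimateGN}, producing $N^{2\cdot\frac{m+1}{2}}=N^{m+1}$. The diffusion directions $q_jf_{k_j}$ are fixed vectors and contribute no $N$-growth. Relatedly, the regularity needed on $\varphi$ is $m+1$ (or $m+2$) derivatives, not a universal $6$; the ``$6$'' in the statement is inherited verbatim from the splitting theorem and should be read as $m+1$.

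Second, your step~3 does not belong to the proof of this theorem. Comparing with the parallel splitting result (Theorem~\ref{thm:discretenvest}) and the subsequent corollary, the left-hand side here is meant to be $\lVert P^N_T\varphi-(Q^N_{(T/n)})^n\varphi\rVert_{\psi_\eta}$; the appearance of $P_T$ is a slip. Indeed, Theorem~\ref{thm:markovgalerkinest} only yields a \emph{pointwise} bound for $w_0\in\mathbb{H}^1$, not a $\psi_\eta$-norm bound over all of $\mathbb{L}^2$, so it could not possibly be absorbed into the stated inequality. The passage from $P^N_T$ to $P_T$ is precisely the content of the corollary that follows, where the extra $N^{-1}$ term reappears.
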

The following result is a version of Corollary~\ref{cor:splitting-convresultfull} for cubature approximations.
\begin{corollary}
	Suppose $m$ odd, and fix $\eta>0$ small enough.
	Assume that $\varphi$ satisfies \eqref{eq:condphiconv} with $\tilde{\eta}<\eta/2$.
	For any $T>0$ and $w_0\in\mathbb{H}^1$, there exists $\varepsilon>0$ and $C=C_{w_0,T,\varphi}>0$ such that for all $n\in\mathbb{N}$ with $T/n<\varepsilon$,
	\begin{equation}
		\lvert P_{T}\varphi(w_0) - (Q^N_{(T/n)})^n\varphi|_{\mathcal{H}_N}(w_0)\rvert
		\le C\left( N^{-1} + N^{2\frac{m+1}{2}}n^{-\frac{m-1}{2}} \right).
	\end{equation}
\end{corollary}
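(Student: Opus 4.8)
The plan is to combine the two convergence contributions exactly as in the proof of Corollary~\ref{cor:splitting-convresultfull}, but now using the cubature error estimate from the preceding theorem instead of the Strang splitting estimate from Theorem~\ref{thm:discretenvest}. First I would observe that, since $\varphi$ satisfies \eqref{eq:condphiconv} with $\tilde{\eta}<\eta/2$, the remarks following Theorem~\ref{thm:discretenvest} apply verbatim: condition \eqref{eq:condphiconv} with $\tilde{\eta}<\eta/2$ implies \eqref{eq:phicondspectralapprox}, so Theorem~\ref{thm:markovgalerkinest} is applicable and gives, for $w_0\in\mathbb{H}^1$ and $t\in[0,T]$,
\begin{equation}
	\lvert P_T\varphi(w_0) - P^N_T(\varphi|_{\mathcal{H}_N})(w_0)\rvert
	\le C N^{-1},
\end{equation}
with $C=C_{w_0,T,\varphi}$. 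Simultaneously, \eqref{eq:condphiconv} ensures $\varphi|_{\mathcal{H}_N}\in\mathcal{B}^{\psi_{\eta}}_6(\mathcal{H}_N)$, and in fact $\lVert\varphi|_{\mathcal{H}_N}\rVert_{\psi_{\tilde{\eta}},6}$ is bounded uniformly in $N$ by the supremum in \eqref{eq:condphiconv}.

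Next I would invoke the cubature convergence theorem above (with the stated oddness of $m$ and $\eta>0$ small enough), which for $\tilde{\eta}<\eta/2$ and $T/n<\varepsilon$ yields
\begin{equation}
	\lVert P^N_T(\varphi|_{\mathcal{H}_N}) - (Q^N_{(T/n)})^n(\varphi|_{\mathcal{H}_N})\rVert_{\psi_{\eta}}
	\le C N^{\,m+1}(T/n)^{\frac{m-1}{2}}\lVert\varphi|_{\mathcal{H}_N}\rVert_{\psi_{\tilde{\eta}},6}
	\le C_{T,\varphi}\, N^{\,m+1} n^{-\frac{m-1}{2}},
\end{equation}
where I have written $2\tfrac{m+1}{2}=m+1$ and absorbed the fixed $T^{(m-1)/2}$ and the $N$-uniform bound on $\lVert\varphi|_{\mathcal{H}_N}\rVert_{\psi_{\tilde{\eta}},6}$ into the constant. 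Evaluating the $\psi_{\eta}$-norm estimate at the point $w_0$ and using $\psi_{\eta}(w_0)<\infty$ converts it into the pointwise bound $\lvert P^N_T(\varphi|_{\mathcal{H}_N})(w_0) - (Q^N_{(T/n)})^n(\varphi|_{\mathcal{H}_N})(w_0)\rvert \le C\,N^{\,m+1}n^{-\frac{m-1}{2}}$.

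Finally I would add the two bounds via the triangle inequality,
\begin{equation}
	\lvert P_T\varphi(w_0) - (Q^N_{(T/n)})^n\varphi|_{\mathcal{H}_N}(w_0)\rvert
	\le C N^{-1} + C N^{\,m+1}n^{-\frac{m-1}{2}},
\end{equation}
and collect the two constants into a single $C=C_{w_0,T,\varphi}$, which is the claimed estimate (noting $N^{2\frac{m+1}{2}}=N^{m+1}$ and $n^{-\frac{m-1}{2}}$ matches). I do not expect any genuine obstacle here: the corollary is a bookkeeping combination of two already-established theorems, and the only points requiring a word of care are (i) checking that the regularity hypothesis \eqref{eq:condphiconv} with $\tilde{\eta}<\eta/2$ feeds both ingredients — it gives \eqref{eq:phicondspectralapprox} for the Galerkin estimate and membership in $\mathcal{B}^{\psi_{\tilde{\eta}}}_6(\mathcal{H}_N)$ with $N$-uniform norm for the cubature estimate — and (ii) the passage from the weighted-norm estimate to a pointwise estimate at $w_0$, which costs only the finite factor $\psi_{\eta}(w_0)$. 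Everything else is identical in structure to the proof of Corollary~\ref{cor:splitting-convresultfull}.
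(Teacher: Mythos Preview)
Your proposal is correct and follows exactly the approach the paper intends: the paper gives no explicit proof for this corollary, simply introducing it as ``a version of Corollary~\ref{cor:splitting-convresultfull} for cubature approximations,'' i.e., the combination of Theorem~\ref{thm:markovgalerkinest} with the cubature error theorem via the triangle inequality. Your added remarks on why \eqref{eq:condphiconv} with $\tilde{\eta}<\eta/2$ feeds both ingredients, and on the passage from the $\psi_\eta$-norm to a pointwise bound at $w_0$, are accurate and make explicit what the paper leaves to the reader.
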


\section{Numerical examples}
\label{sec:numerics}
We consider the problem of approximating \eqref{eq:sns-conteq} with $\nu=10^{-2}$, $w_0=0$, $d=4$, $q_j=1$, $j=1,\dots,4$, and $k_1=(1,0)$, $k_2=(-1,0)$, $k_3=(1,1)$ and $k_4=(-1,-1)$.
\cite[Example~2.5]{HairerMattingly2006} shows that the dynamics generated by this process are ergodic.
We aim to find estimates for $\mathbb{E}[\lVert w(1,0)\rVert]$, $\mathbb{E}[\lVert w(1,0)\rVert_{-1}]$ and $\mathbb{E}[\lVert w(1,0)\rVert_{+1}]$.
We remark that the first and second values equal, up to a constant, the mean enstrophy and energy, respectively.
Furthermore, control of the $\mathbb{H}^1$ norm of $w(1,0)$ means control of the $\mathbb{H}^2$ norm of $\mathcal{K}w(1,0)$, which in turn implies that we can take point evaluations of $\mathcal{K}w(1,0)$ due to the Sobolev embedding theorems in two dimensions.
This is important in the evaluation of cross correlations.

Our numerical simulations are performed using a splitting scheme, the symmetrically weighted sequential splitting
\begin{equation}
	Q^{N}_{T,n}
	:=
	\frac{1}{2}\left( (P^{N,1}_{T/n}P^{N,2}_{T/n})^n + (P^{N,2}_{T/n}P^{N,1}_{T/n})^{n} \right),
\end{equation}
going back at least to \cite[equation (25)]{Strang1963} and being of second order for problems that are smooth enough.

We apply a Monte Carlo method.
For a single realisation, we have to solve, alternatingly, a time-dependent Euler equation and an Ornstein-Uhlenbeck equation.
Note that the solution of the Ornstein-Uhlenbeck equation follows a Gaussian process, and its distribution is therefore explicitly known.
To discretise the Euler equation, we apply the standard RK4 scheme.
While the Heun method, i.e., an RK2 scheme, provides the correct order such that the entire approximation is of second order, see \cite{NinomiyaNinomiya2009}, it has suboptimal stability properties, leading to strong step size restrictions, see \cite[Section~D.2.5]{CanutoHussainiQuarteroniZang2006}.
In this regard, see also \cite{HutzenthalerJentzenKloeden2011} for issues of stability of the Euler-Maruyama scheme for equations with non-globally Lipschitz coefficients.
As we apply the FFT to determine the value of $(\mathcal{K}w_N\cdot\nabla)w_N$ efficiently, we observe aliasing effects, which are reduced by the use of the 2/3 dealiasing, see \cite[Section~3.3.2]{CanutoHussainiQuarteroniZang2007}.

To find the expected values in the definition of $P^{N,2}_{T/n}$, we use quasi-Monte Carlo integration, applying the Sobol$'$ sequences of Joe and Kuo \cite{JoeKuo2008}.
Also, instead of simulating both terms in the definition of $Q^{N}_{T,n}$, we use a Bernoulli random variable to generate either of them, retaining the order of the approximation.

\begin{figure}[htpb]
	\begin{center}
		\includegraphics[height=6cm]{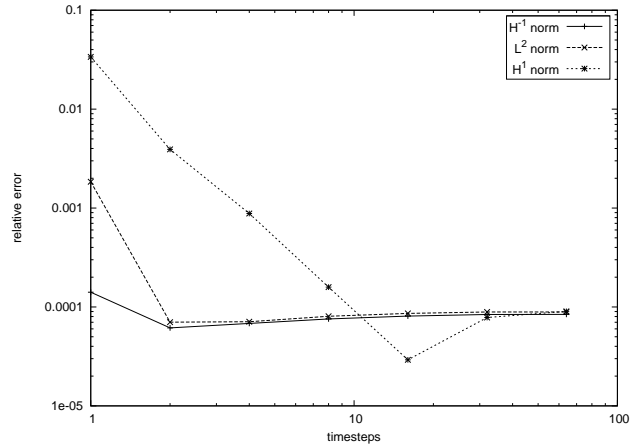}
	\end{center}
	\caption{Error plot, increasing number of timesteps}
	\label{fig:stepinc}
\end{figure}
\begin{figure}[htpb]
	\begin{center}
		\includegraphics[height=6cm]{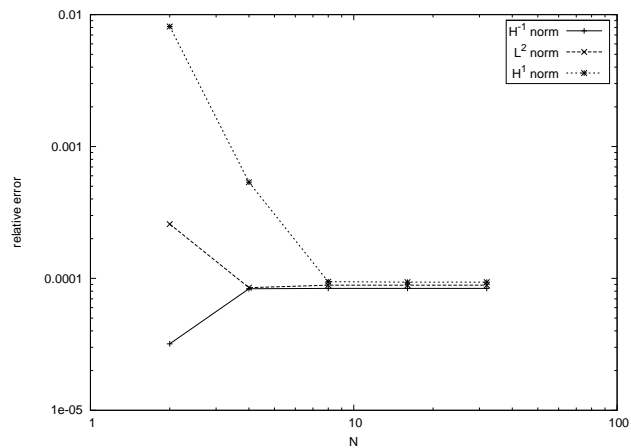}
	\end{center}
	\caption{Error plot, increasing number of Fourier modes}
	\label{fig:Ninc}
\end{figure}
\begin{figure}[htpb]
	\begin{center}
		\includegraphics[height=6cm]{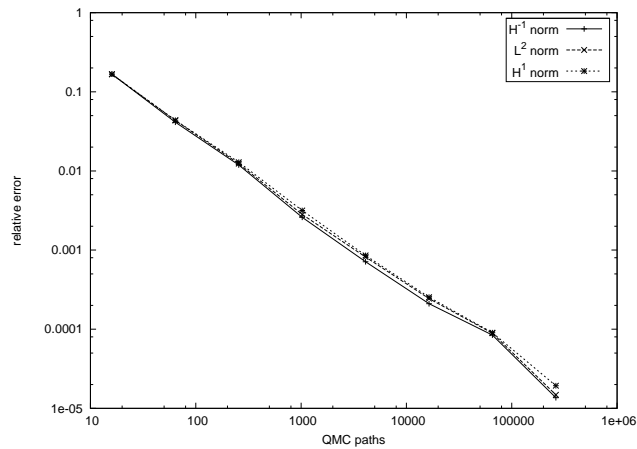}
	\end{center}
	\caption{Error plot, increasing number of quasi-Monte Carlo paths}
	\label{fig:Kinc}
\end{figure}
Figures~\ref{fig:stepinc}, \ref{fig:Ninc} and \ref{fig:Kinc} present the results of numerical calculations with increasing number of timesteps, Fourier modes, and quasi-Monte Carlo paths.
All errors are relative, and were calculated through comparison with a reference solution found using $K=2^{20}$ quasi-Monte Carlo paths, $N=32$ and $n=128$ timesteps.
There, we obtained the values $\mathbb{E}[\lVert w(1,0)\rVert_{-1}]\sim1.138449630686444$, $\mathbb{E}[\lVert w(1,0)\rVert]\sim1.319968848291092$, and $\mathbb{E}[\lVert w(1,0)\rVert_{+1}]\sim1.620419847035606$.
In Figure~\ref{fig:stepinc}, we chose the other parameters to be $K=2^{16}$ and $N=32$; in Figure~\ref{fig:Ninc}, $K=2^{16}$ and $n=128$; and in Figure~\ref{fig:Kinc}, $N=32$ and $n=64$.

We clearly see that mainly the number of quasi-Monte Carlo paths limits the attainable accuracy.
Nevertheless, with $2^{12}=4096$ paths, we obtain a relative error of less than $10^{-3}$, and that calculation took approximately 60 seconds running on 16 cores of a Primergy RX200 S6 spotting 4 Intel Xeon CPU X5650 processor, each of which provides 6 cores.
In Figure~\ref{fig:stepinc}, we observe that we obtain a rate of convergence of about $2.5$ for the $\mathbb{H}^1$ norm with respect to the number of time steps, which is even more than the theoretically predicted rate of $2$ and seems to result from the fact that we compare with numerical estimates instead of the exact value.
The solution of the model problem is smooth (see also \cite{Mattingly2002} in this regard), and indeed, Figure~\ref{fig:Ninc} exhibits spectral convergence in the number of Fourier modes.

\section{Conclusion}
\label{sec:conclusion}
We have introduced and analysed novel high order approximation schemes for the sto\-chastic Navier-Stokes equations on the 2D torus.
We prove high order accuracy in time and give precise estimates for the dependence on the order of the spectral Galerkin discretisation.
Using high order cubature paths, it is possible to attain convergence of arbitrary order in time.

From a practical point of view, the splitting schemes presented in this work have the important advantage that well-tested and robust solvers for the deterministic Navier-Stokes and Euler equations can be reused.
Furthermore, the algorithm makes increasing the dimension of the driving Brownian motion easy.
Numerical examples establish the applicability of the method to some simple, but relevant functionals.

\Appendix
\section{Proof of Proposition~\ref{prop:sns-ousupermart}}
\begin{lemma}
	For $N\sim\mathcal{N}(0,1)$, $j=1,\dots,d$, and $S$, $A$, $B\in\mathbb{R}$ with $C\in\mathbb{R}$ small enough,
	\begin{alignat}{2}
		\mathbb{E}[\exp(C( S^2 + &2SABN + (B N)^2 ))] \notag \\
		&=
		\frac{1}{(1-2CB^2)^{1/2}}\exp\left( \left(1+\frac{2CA^2B^2}{1-2CB^2}\right) CS^2 \right).
	\end{alignat}
\end{lemma}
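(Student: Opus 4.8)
The plan is to evaluate the expectation directly as a one-dimensional Gaussian integral. Writing $N\sim\mathcal{N}(0,1)$ with density $(2\pi)^{-1/2}\exp(-x^2/2)$ and noting that the argument of the exponential is a quadratic polynomial in $N$, the quantity to be computed is
\[
  \frac{1}{\sqrt{2\pi}}\int_{\mathbb{R}}\exp\bigl( CB^2 x^2 + 2CSAB\,x + CS^2 - \tfrac{1}{2}x^2 \bigr)\dd x.
\]
If $B=0$ the identity is trivial, so I may assume $B\neq 0$; the phrase ``$C$ small enough'' is then made precise as the requirement $1-2CB^2>0$, which guarantees that the coefficient of $x^2$, namely $CB^2-\tfrac12=-\tfrac12(1-2CB^2)$, is strictly negative, so that the integral converges.

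Next I would complete the square. Setting $a:=\tfrac12(1-2CB^2)>0$, the exponent equals $-a\bigl(x-\tfrac{CSAB}{a}\bigr)^2+\tfrac{C^2S^2A^2B^2}{a}+CS^2$. The translation-invariant Gaussian integral $\frac{1}{\sqrt{2\pi}}\int_{\mathbb{R}}\exp(-a(x-\mu)^2)\dd x$ evaluates to $(2a)^{-1/2}=(1-2CB^2)^{-1/2}$, which is exactly the claimed prefactor. Simplifying the remaining additive constant, $CS^2+\tfrac{C^2S^2A^2B^2}{a}=CS^2\bigl(1+\tfrac{2CA^2B^2}{1-2CB^2}\bigr)$ by the definition of $a$, and combining this with the prefactor yields the stated formula.

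There is no genuine obstacle here; it is a routine Gaussian computation. The only point requiring a little care is fixing the admissible range of $C$ — uniformly over the finitely many values of $B$ that arise in the intended application of the lemma — so that $1-2CB^2$ stays bounded away from zero, ensuring simultaneously the convergence of the integral and the well-definedness of the square roots and of the fraction appearing on the right-hand side.
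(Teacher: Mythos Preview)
Your proof is correct and follows essentially the same route as the paper: write the expectation as a Gaussian integral, complete the square in the exponent, and read off the normalising constant and the additive term. Your explicit identification of the condition $1-2CB^2>0$ and the trivial case $B=0$ is a minor clarification, but otherwise the arguments coincide.
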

\begin{proof}
	A direct calculation yields
	\begin{alignat}{2}
		\mathbb{E}[\exp(C( S^2 &+ 2SABN + (B N)^2 ))]
		\notag \\
		&=
		\int_{\mathbb{R}} \exp(C(S^2+2SA B y+(B y)^2)) \frac{1}{(2\pi)^{1/2}}\exp\left( -\frac{1}{2}y^2 \right)\dd y \notag \\
		&=
		\frac{1}{(2\pi)^{1/2}}\int_{\mathbb{R}} \exp\left( -\frac{1}{2}
		(1-2C B^2)\left( y-\frac{2CSAB}{1-2CB^2} \right)^2 \right)\dd y \times \notag \\
		&\phantom{=}\times
		\exp\left( \left(1+ \frac{2CA^2B^2}{1-2CB^2}\right)CS^2 \right) \notag \\
		&=
		\frac{1}{(1-2CB^2)^{1/2}}\exp\left( \left(1+\frac{2CA^2B^2}{1-2CB^2}\right)CS^2 \right), \notag
	\end{alignat}
	which proves the result.
\qquad\end{proof}
\begin{corollary}
	\label{cor:expectationexpsquarenorm}
	For independent $N_j\sim\mathcal{N}(0,1)$, $j=1,\dots,d$, and $S$, $A_j$, $B_j\in\mathbb{R}$ with $C\in\mathbb{R}$ small enough,
	\begin{alignat}{2}
		\mathbb{E}[\exp(C( S^2 + &\sum_{j=1}^{d}2SA_jB_jN_j + \sum_{j=1}^{d}(B_j N_j)^2 ))] \notag \\
		&=
		\frac{1}{\prod_{j=1}^{d}(1-2CB_j^2)^{1/2}}\exp\left( \left(1+\sum_{j=1}^{d}\frac{2CA_j^2B_j^2}{1-2CB_j^2}\right) CS^2 \right).
	\end{alignat}
\end{corollary}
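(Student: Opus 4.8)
The plan is to reduce this $d$-dimensional identity to $d$ independent applications of the preceding one-dimensional lemma, exploiting that the $N_j$ are mutually independent. First I would split the shared term $CS^2$ off from the coordinate-wise contributions, rewriting the exponent as
\[
C\Bigl(S^2 + \sum_{j=1}^{d}2SA_jB_jN_j + \sum_{j=1}^{d}(B_jN_j)^2\Bigr)
=
CS^2 + \sum_{j=1}^{d}\bigl(2CSA_jB_jN_j + C(B_jN_j)^2\bigr),
\]
so that after exponentiating the integrand factorizes as $\exp(CS^2)\prod_{j=1}^{d}\exp\bigl(2CSA_jB_jN_j + C(B_jN_j)^2\bigr)$.

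By independence of $N_1,\dots,N_d$ the expectation of this product equals the product of the expectations, which leaves
\[
\mathbb{E}\Bigl[\exp\Bigl(C\bigl(S^2 + \sum_{j=1}^{d}2SA_jB_jN_j + \sum_{j=1}^{d}(B_jN_j)^2\bigr)\Bigr)\Bigr]
=
\exp(CS^2)\prod_{j=1}^{d}\mathbb{E}\bigl[\exp(2CSA_jB_jN_j + C(B_jN_j)^2)\bigr].
\]
Each single factor is exactly the quantity evaluated by the one-dimensional lemma, modulo the $\exp(CS^2)$ it carries along: taking the lemma with $A=A_j$, $B=B_j$ and dividing through by $\exp(CS^2)$ gives
\[
\mathbb{E}\bigl[\exp(2CSA_jB_jN_j + C(B_jN_j)^2)\bigr]
=
\frac{1}{(1-2CB_j^2)^{1/2}}\exp\Bigl(\frac{2CA_j^2B_j^2}{1-2CB_j^2}\,CS^2\Bigr).
\]
Substituting this back into the product and absorbing the prefactor $\exp(CS^2)$ into the sum of exponents produces the asserted closed form directly. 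Equivalently, one can skip the reference to the lemma and simply complete the square in each one-dimensional Gaussian integral, which is the very computation already performed in the lemma's proof; an induction on $d$ is a third, essentially identical, route.

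There is no genuine obstacle here: the corollary is a bookkeeping step on top of the lemma. The only point requiring a word of care is the hypothesis ``$C\in\mathbb{R}$ small enough'': the Gaussian integrals converge, and the square roots $(1-2CB_j^2)^{1/2}$ are meaningful, precisely when $1-2CB_j^2>0$ for every $j$, i.e.\ $|C|<\bigl(2\max_{1\le j\le d}B_j^2\bigr)^{-1}$, which is the finite intersection of the constraints already present in the one-dimensional lemma.
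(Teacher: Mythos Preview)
Your proposal is correct and is exactly the argument the paper has in mind: the corollary is stated without proof immediately after the one-dimensional lemma, indicating it follows by factoring the expectation via independence of the $N_j$ and applying the lemma coordinatewise, precisely as you do. Your remark on the smallness condition for $C$ (namely $1-2CB_j^2>0$ for each $j$) is also the right interpretation.
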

{\em Proof of Proposition~\ref{prop:sns-ousupermart}}.
	Note that 
	\begin{equation}
		w^2(t,w_0)=\exp(t\varepsilon\nu\Delta)w_0 + \int_{0}^{t}\exp( (t-s)\varepsilon\nu\Delta)\sum_{j=1}^{d}q_j f_{k_j}\dd W^j_s.
	\end{equation}
	Denoting by $\lambda_j$ the eigenvalue of $f_{k_j}$ with respect to the operator $\varepsilon\nu\Delta$, $\varepsilon\nu\Delta f_{k_j}=\lambda_j f_{k_j}$, we see that
	\begin{equation}
		\int_{0}^{t}\exp( (t-s)\varepsilon\nu\Delta)Q\dd W_s
		=
		\sum_{j=1}^{d}\int_{0}^{t}\exp( (t-s)\lambda_j)q_j f_{k_j}\dd W^j_s.
	\end{equation}
	The coefficient $Z^j_t:=\int_{0}^{t}\exp( (t-s)\tilde{\lambda}_{k_j} )\dd W^j_s$ is normally distributed, more precisely, $Z^j_t\sim\mathcal{N}\left(0,\frac{1-\exp(2t\tilde{\lambda}_{k_j})}{-2\tilde{\lambda}_{k_j}}\right)$.
	In particular, with $S(t):=\exp(t\varepsilon\nu\Delta)$,
	\begin{equation}
		P^2_t\psi(w)
		=
		\mathbb{E}\biggl[ \exp\Bigl(\eta\lVert S(t)w + \sum_{j=1}^{d}q_jZ^j_tf_{k_j}\rVert_0^2\Bigr) \biggr].
	\end{equation}
	Note
	\begin{alignat}{2}
		\lVert S(t)w &+ \sum_{j=1}^{d}q_jZ^j_tf_{k_j}\rVert^2
		= 
		\lVert S(t)w\rVert^2 \notag \\
		&+ 2\sum_{j=1}^{d}\frac{\langle S(t)w,q_j f_{k_j}\rangle}{\lVert S(t)w\rVert\cdot\lVert q_j f_{k_j}\rVert} \lVert S(t)w\rVert\cdot( \lVert q_j f_{k_j}\rVert Z^j_t ) + \sum_{j=1}^{d}(\lVert q_j f_{k_j}\rVert Z^j_t)^2, 
	\end{alignat}
	and apply Corollary~\ref{cor:expectationexpsquarenorm} with $C=\eta$, $S=\lVert S(t)w\rVert$, 
	$A_j=\frac{\langle S(t)w,q_j f_{k_j}\rangle}{\lVert S(t)w\rVert_0\cdot\lVert q_j f_{k_j}\rVert_0}$ and 
	$B_j=\lVert q_j f_{k_r}\rVert\left( \frac{1-\exp(2t\tilde{\lambda}_{k_j})}{-2\tilde{\lambda}_{k_j}} \right)^{1/2}$.
	As $A_j^2\le 1$ and 
	\begin{equation}
		1-2C B_j^2
		=
		1-2\eta\lVert q_j f_{k_j}\rVert^2\frac{1-\exp(2t\tilde{\lambda}_{k_j})}{-2\tilde{\lambda}_{k_j}}
		\ge
		\exp(2\omega t)
	\end{equation}
	for $0>\omega\ge\tilde{\lambda}_{k_j}$ and $0<\eta\le\frac{-\omega}{\lVert q_j f_{k_j}\rVert^2}$ and, similarly,
	\begin{equation}
		1+\sum_{j=1}^{d}\frac{2C A_j^2B_j^2}{1-2CB_j^2}
		\le
		\exp(2\alpha t)
	\end{equation}
	for $\alpha>0$ and $\eta\le\min_{j=1,\dots,d}\frac{2\alpha}{(d-1)\lVert q_j f_{k_j}\rVert^2}$, we obtain
	\begin{equation}
		P^2_t\psi_{\eta}(w)
		\le
		\exp( -dt\omega )
		\exp( \eta \lVert w\rVert^2 )
		=\exp(-dt\omega)\psi_{\eta}(w),
	\end{equation}
	the required result. \qquad \endproof

{\bf Acknowledgements.}
The author thanks Markus Melenk and Josef Teichmann for helpful discussions.
The numerical calculations were preformed on the computing facilities of the Departement Mathematik of ETH Z\"urich.
Parts of the code were written by Dejan Velu\v{s}\v{c}ek, whom the author thanks for his support.

\bibliographystyle{siam}
\bibliography{lit}

\def\cprime{$'$}
\begin{thebibliography}{10}

\bibitem{AlbeverioFlandoliSinai2008}
{\sc S.~Albeverio, F.~Flandoli, and Y.~G. Sinai}, {\em S{PDE} in hydrodynamic:
  recent progress and prospects}, vol.~1942 of Lecture Notes in Mathematics,
  Springer-Verlag, Berlin, 2008.
\newblock Lectures given at the C.I.M.E. Summer School held in Cetraro, August
  29--September 3, 2005, Edited by Giuseppe Da Prato and Michael R{\"o}ckner.

\bibitem{Alfonsi2010}
{\sc A.~Alfonsi}, {\em High order discretization schemes for the {CIR} process:
  application to affine term structure and {H}eston models}, Math. Comp., 79
  (2010), pp.~209--237.

\bibitem{BayerTeichmann2008}
{\sc C.~Bayer and J.~Teichmann}, {\em Cubature on {W}iener space in infinite
  dimension}, Proc. R. Soc. Lond. Ser. A Math. Phys. Eng. Sci., 464 (2008),
  pp.~2493--2516.

\bibitem{BensoussanTemam1973}
{\sc A.~Bensoussan and R.~Temam}, {\em \'{E}quations stochastiques du type
  {N}avier-{S}tokes}, J. Functional Analysis, 13 (1973), pp.~195--222.

\bibitem{BerselliIliescuLayton2006}
{\sc L.~C. Berselli, T.~Iliescu, and W.~J. Layton}, {\em Mathematics of large
  eddy simulation of turbulent flows}, Scientific Computation, Springer-Verlag,
  Berlin, 2006.

\bibitem{CanutoHussainiQuarteroniZang2006}
{\sc C.~Canuto, M.~Y. Hussaini, A.~Quarteroni, and T.~A. Zang}, {\em Spectral
  methods}, Scientific Computation, Springer-Verlag, Berlin, 2006.
\newblock Fundamentals in single domains.

\bibitem{CanutoHussainiQuarteroniZang2007}
\leavevmode\vrule height 2pt depth -1.6pt width 23pt, {\em Spectral methods},
  Scientific Computation, Springer, Berlin, 2007.
\newblock Evolution to complex geometries and applications to fluid dynamics.

\bibitem{CrisanGhazali2007}
{\sc D.~Crisan and S.~Ghazali}, {\em On the convergence rates of a general
  class of weak approximations of {SDE}s}, in Stochastic differential
  equations: theory and applications, vol.~2 of Interdiscip. Math. Sci., World
  Sci. Publ., Hackensack, NJ, 2007, pp.~221--248.

\bibitem{DaPratoZabczyk1996}
{\sc G.~Da~Prato and J.~Zabczyk}, {\em Ergodicity for infinite-dimensional
  systems}, vol.~229 of London Mathematical Society Lecture Note Series,
  Cambridge University Press, Cambridge, 1996.

\bibitem{DoersekTeichmann2010}
{\sc P.~{D\"orsek} and J.~{Teichmann}}, {\em {A Semigroup Point Of View On
  Splitting Schemes For Stochastic (Partial) Differential Equations}}, ArXiv
  e-prints,  (2010).

\bibitem{EMattingly2001}
{\sc W.~E and J.~C. Mattingly}, {\em Ergodicity for the {N}avier-{S}tokes
  equation with degenerate random forcing: finite-dimensional approximation},
  Comm. Pure Appl. Math., 54 (2001), pp.~1386--1402.

\bibitem{GyurkoLyons2011}
{\sc L.~G. Gyurk\'o and T.~J. Lyons}, {\em Efficient and practical
  implementations of cubature on wiener space}, in Stochastic Analysis 2010,
  D.~Crisan, ed., Springer Berlin Heidelberg, 2011, pp.~73--111.

\bibitem{HairerMattingly2006}
{\sc M.~Hairer and J.~C. Mattingly}, {\em Ergodicity of the 2{D}
  {N}avier-{S}tokes equations with degenerate stochastic forcing}, Ann. of
  Math. (2), 164 (2006), pp.~993--1032.

\bibitem{HairerMattingly2008}
\leavevmode\vrule height 2pt depth -1.6pt width 23pt, {\em Spectral gaps in
  {W}asserstein distances and the 2{D} stochastic {N}avier-{S}tokes equations},
  Ann. Probab., 36 (2008), pp.~2050--2091.

\bibitem{HansenOstermann2009}
{\sc E.~Hansen and A.~Ostermann}, {\em Exponential splitting for unbounded
  operators}, Math. Comp., 78 (2009), pp.~1485--1496.

\bibitem{HouLuoRozovskiiZhou2006}
{\sc T.~Y. Hou, W.~Luo, B.~Rozovskii, and H.-M. Zhou}, {\em Wiener chaos
  expansions and numerical solutions of randomly forced equations of fluid
  mechanics}, J. Comput. Phys., 216 (2006), pp.~687--706.

\bibitem{HutzenthalerJentzenKloeden2011}
{\sc M.~Hutzenthaler, A.~Jentzen, and P.~E. Kloeden}, {\em Strong and weak
  divergence in finite time of euler's method for stochastic differential
  equations with non-globally lipschitz continuous coefficients}, Proceedings
  of the Royal Society A: Mathematical, Physical and Engineering Science, 467
  (2011), pp.~1563--1576.

\bibitem{JentzenKloeden2009}
{\sc A.~Jentzen and P.~E. Kloeden}, {\em The numerical approximation of
  stochastic partial differential equations}, Milan J. Math., 77 (2009),
  pp.~205--244.

\bibitem{JentzenKloedenNeuenkirch2009}
{\sc A.~Jentzen, P.~E. Kloeden, and A.~Neuenkirch}, {\em Pathwise convergence
  of numerical schemes for random and stochastic differential equations}, in
  Foundations of computational mathematics, {H}ong {K}ong 2008, vol.~363 of
  London Math. Soc. Lecture Note Ser., Cambridge Univ. Press, Cambridge, 2009,
  pp.~140--161.

\bibitem{JoeKuo2008}
{\sc S.~Joe and F.~Y. Kuo}, {\em Constructing {S}obol\cprime\ sequences with
  better two-dimensional projections}, SIAM J. Sci. Comput., 30 (2008),
  pp.~2635--2654.

\bibitem{Kotelenez1995}
{\sc P.~Kotelenez}, {\em A stochastic {N}avier-{S}tokes equation for the
  vorticity of a two-dimensional fluid}, Ann. Appl. Probab., 5 (1995),
  pp.~1126--1160.

\bibitem{Kusuoka2001}
{\sc S.~Kusuoka}, {\em Approximation of expectation of diffusion process and
  mathematical finance}, in Taniguchi {C}onference on {M}athematics {N}ara '98,
  vol.~31 of Adv. Stud. Pure Math., Math. Soc. Japan, Tokyo, 2001,
  pp.~147--165.

\bibitem{LyonsVictoir2004}
{\sc T.~Lyons and N.~Victoir}, {\em Cubature on {W}iener space}, Proc. R. Soc.
  Lond. Ser. A Math. Phys. Eng. Sci., 460 (2004), pp.~169--198.
\newblock Stochastic analysis with applications to mathematical finance.

\bibitem{Mattingly1998}
{\sc J.~C. Mattingly}, {\em The {S}tochastic {N}avier-{S}tokes {E}quation~--
  {E}nergy {E}stimates and {P}hase {S}pace {C}ontraction}, PhD thesis,
  Princeton University, 1998.

\bibitem{Mattingly2002}
\leavevmode\vrule height 2pt depth -1.6pt width 23pt, {\em The dissipative
  scale of the stochastics {N}avier-{S}tokes equation: regularization and
  analyticity}, J. Statist. Phys., 108 (2002), pp.~1157--1179.
\newblock Dedicated to David Ruelle and Yasha Sinai on the occasion of their
  65th birthdays.

\bibitem{MattinglyPardoux2006}
{\sc J.~C. Mattingly and {\'E}.~Pardoux}, {\em Malliavin calculus for the
  stochastic 2{D} {N}avier-{S}tokes equation}, Comm. Pure Appl. Math., 59
  (2006), pp.~1742--1790.

\bibitem{MikuleviciusRozovskii2004}
{\sc R.~Mikulevicius and B.~L. Rozovskii}, {\em Stochastic {N}avier-{S}tokes
  equations for turbulent flows}, SIAM J. Math. Anal., 35 (2004),
  pp.~1250--1310 (electronic).

\bibitem{NinomiyaNinomiya2009}
{\sc M.~Ninomiya and S.~Ninomiya}, {\em A new higher-order weak approximation
  scheme for stochastic differential equations and the {R}unge-{K}utta method},
  Finance Stoch., 13 (2009), pp.~415--443.

\bibitem{NinomiyaVictoir2008}
{\sc S.~Ninomiya and N.~Victoir}, {\em Weak approximation of stochastic
  differential equations and application to derivative pricing}, Appl. Math.
  Finance, 15 (2008), pp.~107--121.

\bibitem{Pope2000}
{\sc S.~B. Pope}, {\em Turbulent flows}, Cambridge University Press, Cambridge,
  2000.

\bibitem{Strang1963}
{\sc G.~Strang}, {\em Accurate partial difference methods. {I}. {L}inear
  {C}auchy problems}, Arch. Rational Mech. Anal., 12 (1963), pp.~392--402.

\bibitem{TanakaKohatsuHiga2009}
{\sc H.~Tanaka and A.~Kohatsu-Higa}, {\em An operator approach for {M}arkov
  chain weak approximations with an application to infinite activity {L}\'evy
  driven {SDE}s}, Ann. Appl. Probab., 19 (2009), pp.~1026--1062.

\end{thebibliography}

\end{document}